\documentclass{amsart} 
\usepackage{amssymb}
\usepackage{amsthm} 
\usepackage[usenames,dvipsnames]{color}
 \usepackage[square, numbers]{natbib}
 \newlength\mylen
\setcitestyle{citesep={,\kern-\mylen}}
\usepackage{xurl}
\usepackage[colorlinks,citecolor=blue,linkcolor=blue,urlcolor=black]{hyperref}
\usepackage[capitalise]{cleveref}
\usepackage{thmtools}
\usepackage{orcidlink}

\author[N.~Greenberg]{Noam Greenberg \orcidlink{0000-0003-2917-3848}} 
\address{School of Mathematics and Statistics, Victoria University of Wellington, Wellington, New Zealand}
\email{noam.greenberg@vuw.ac.nz}

\author[A.~Nies]{Andr\'e Nies \orcidlink{0000-0002-0666-5180}} 
\address{School of Computer Science, University of Auckland, Auckland, New Zealand}
\email{andre@cs.auckland.ac.nz}
\thanks{The authors gratefully acknowledge the support of the Marsden fund of New Zealand. Some of this work was carried out at the Erwin Schr\"odinger Institute (ESI), Vienna  as part of the  ESI program on Reverse Mathematics in  2025.}

\author[D.~Turetsky]{Daniel Turetsky \orcidlink{0000-0002-4154-4800}} 
\address{School of Mathematics and Statistics, Victoria University of Wellington, Wellington, New Zealand}
\email{dan.turetsky@vuw.ac.nz}

\title{Characterising SJT reducibility}

\newcommand{\seq}[1]{{\left\langle{#1}\right\rangle}}
\newcommand{\uhr}[1]{\! \upharpoonright_{#1}}

\newcommand \s{\sigma}

\newcommand{\converge}{\!\!\downarrow}

\renewcommand \phi {\varphi}

\newcommand{\DII}{\Delta^0_2}
\newcommand{\NN}{{\mathbb{N}}}

\newcommand{\QQ}{{\mathbb{Q}}}

\newcommand{\sub}{\subseteq}
\newcommand{\sN}[1]{_{#1\in \NN}}
 
\newcommand{\cantor}{2^{ \omega}}

\newcommand{\ML}{Martin-L\"of}

\newcommand{\SI}[1]{\Sigma^0_{#1}}
\newcommand{\PI}[1]{\Pi^0_{#1}}

\newcommand{\bi}{\begin{itemize}}
\newcommand{\ei}{\end{itemize}}

\newcommand{\Halt}{{\ES'}}
\newcommand{\ES}{\emptyset}

\newcommand{\fa}{\forall}
\newcommand{\lep}{\le^+}

\newcommand{\la}{\langle}
\newcommand{\ra}{\rangle}
\newcommand{\Kuc}{Ku{\v c}era}

\newcommand{\leT}{\le_{\mathrm{T}}}

\newcommand{\ltt}{\le_{\mathrm{tt}}}

\newcommand{\MLR}{\mbox{\rm \textsf{MLR}}}
\newcommand{\Om}{\Omega}

\newcommand{\leb}{\mathbf{\lambda}}
\newcommand{\lwtt}{\le_{\mathrm{wtt}}}

\newcommand\+[1]{\mathcal{#1}}

\newcommand{\LR}{\Leftrightarrow}

\newcommand{\RA}{\Rightarrow}

\DeclareMathOperator \BLR{BLR}

\newcommand \DemBLR{\textup{Demuth}_{\textup{BLR}}}

 \newcommand{\bc}{\begin{center}}
\newcommand{\ec}{\end{center}}

\newcommand{\SJR}{\le_{\textup{\scriptsize{SJT}}}}
\newcommand{\LRR}{{\textup{\scriptsize{LR}}}}
 
\newcommand{\cost}{\mathbf{c}}

\renewcommand{\epsilon}{\varepsilon}
\newcommand{\rest}{\restriction}

\newcommand{\Tur}{{\textup{T}}}
\newcommand{\floor}[1]{\left\lfloor{#1}\right\rfloor}

\newcommand{\nge}{\not\ge}

\theoremstyle{plain}
\newtheorem{theorem}{Theorem}[section]

\newcounter{claimCounter}[theorem]

\newtheorem{lemma}[theorem]{Lemma} 
\newtheorem{corollary}[theorem]{Corollary} 
\newtheorem{proposition}[theorem]{Proposition} 

\theoremstyle{definition}
\newtheorem{definition}[theorem]{Definition} 
\newtheorem{question}[theorem]{Question}

\theoremstyle{remark}

\newtheorem{remark}[theorem]{Remark}

\newcounter{BenignClaim}

\declaretheorem[numberlike=BenignClaim,style=remark,name=Claim,refname={Claim,Claims}]{benignclaim}

\begin{document}
\maketitle
\begin{abstract}
SJT reducibility between sets $A,B \subseteq  \mathbb N$ is defined by $A \le_{SJT}  B$ if for each computable function $h$ that is unbounded and nondecreasing, there is an $h$-bounded uniformly $B$-c.e.\ trace $(T_n)_{n \in \mathbb N} $ such that for each $n$, the value $J^A(n)$ of the jump is in~$T_n$, if defined. This reducibility is slightly weaker than Turing reducibility. We study SJT reducibility, and as a main result give several characterisations of it on the $K$-trivial sets. This is the first case of extending the three lowness paradigms, weak as an oracle, computed by many, and inert, to the setting of weak reducibilities.
\end{abstract}

\section{Introduction}
The concept of a weak reducibility on  sets of natural numbers  arises by combining two general notions  developed in   computability theory from  the year 2002 onwards: lowness properties of sets~\cite{Nies:AM,Nies:ICM}, and partial relativisation~\cite{Barmpalias.Miller.ea:12}.   A \textit{lowness property} formalises a particular aspect of   computational  simplicity of a set of natural numbers (henceforth  simply called a set). Three paradigms have been proposed~\cite{Greenberg.Hirschfeldt.ea:12}:  being  weak as an oracle, computed by many oracles, and inert; see Section~\ref{ss:LP} below.  A \textit{partial relativisation} of a computability theoretic concept $\+ C$  is obtained by  relativising only certain   constitutents of the concept to an oracle, while the remaining constituents, typically bounds on sizes of finite sets or on the number of changes of a computable approximation of a set, are left unrelativised. See  \cite[Section~2.1]{Barmpalias.Miller.ea:12} for  examples.  It is customary to use the term  ``$\+ C$ by $X$" for  a  partial relativisation of $\+ C$  to an oracle $X$; the constituents of the concept that are relativised  are assumed to be understood from the context. 
Each of these two general notions was obtained by crystalising mathematical intuition that   developed  through     series of works;    the citations above  merely  point to some early  places (known to us) where the notions were formulated.  

\subsection{Weak reducibilities}  A  pre-ordering $\le_W$ on  sets is called a \emph{weak reducibility} if $\le_W$ is arithmetical, $X \leT Y$ implies $X \le_W Y$, and $X' \not \le_W X$ for each $X$.     This terminology \cite[Section~5.6]{Nies:book}  is chosen to be opposite to the terminology of   strong reducibilities  of Odifreddi~\cite{Odifreddi:81} (such as truth table   reducibility).

Given  an arithmetical  lowness property $\+ L$ and a proper choice of its partial relativisation, the binary relation ``$A $ is in $\+ L$ by $B$" tends to  be  a weak reducibility (in particular, it is transitive, and implied by $\leT$).  We consider two examples. 
The first is the much studied  LR-reducibility~\cite{Nies:AM}.  Let $\MLR^X$ be the class of \ML\ random sets relative to $X$, and recall that $A$ is low for ML-random if $\MLR \sub \MLR^A$. 
LR-reducibility $\le_{\LRR}$   is obtained by partially relativising this to $B$: for $A \le_{\LRR}B$ to hold, one requires that $\MLR^B\sub \MLR^A$, rather than the full relativisation to $B$, which would be $\MLR^B\sub \MLR^{A\oplus B}$. 

As a second  example, more  relevant to the present paper, let us consider the lowness property of being jump traceable.  Recall that an \emph{order function} is an unbounded, nondecreasing computable function on $\NN$.  A \emph{c.e.\ trace} is a uniformly c.e.\  sequence $\seq{T_n}\sN n$ of finite sets. For an order function $h$, such a trace   is $h$-bounded if $|T_n| \le h(n)$ for each $n$. A trace is computably bounded if it is $h$-bounded for some computable order function~$h$. 

\begin{definition}[\cite{Nies:06}] \label{def:jump_traceable}
A set $A$ is \emph{jump-traceable} if there is a computably bounded c.e.\ trace $\seq{T_n}\sN n$ such that for all~$n$, $J^A(n)$ is in $T_n$ if it is defined.	
\end{definition}
 Here $J^A$ is a universal $A$-partial computable function.  It is easy to see that each jump traceable set is $\mathrm{GL}_1$~\cite[8.4.3]{Nies:book}. The appropriate partial relativisation to an oracle $B$, namely $A$ is jump traceable by $B$, asks  that the trace be c.e.\ in $B$, but   still only  $J^A$, rather than  $J^{A\oplus B}$, is traced, and  the order function bounding the trace is still computable. This yields    the reducibility $\le_\mathrm{JT}$, introduced  in \cite[8.4.13]{Nies:book},  with transitivity verified in 8.4.14.  To check that  $\le_\mathrm{JT}$ is a weak reducibility, note that  it is clearly $\SI 4$, and implied by $\leT$.  Relativising the fact that each jump traceable is $\mathrm{GL}_1$ one shows that    $X' \not \le_\mathrm{JT}  X$ for each set $X$. 
 
The class of jump traceable sets  has a perfect $\PI 1$ subclass.  It  admits superhigh members~\cite{Kjos.Nies:09}, so it cannot be considered a   strong lowness property. If one requires that $J^A$ has an $h$-bounded trace for \emph{every} order function~$h$, one obtains the notion of strong jump traceability, introduced in~\cite{Figueira.ea:08}. This notion has been  studied extensively~\cite{Cholak.Downey.ea:08,Diamondstone.Greenberg.ea:15,DG:SJT2,Greenberg:11,Greenberg.Hirschfeldt.ea:12,Greenberg.Nies:11,Greenberg.Turetsky:14,Ng:10}, with applications outside randomness \cite{Downey.Greenberg:13}; see \cite{Greenberg.Turetsky:18} for a survey. The central notion for this paper is a corresponding weak reducibility.

\begin{definition}[SJT reducibility, \cite{Nies:book},  8.4.37] \label{def;SJT} For sets $A,B$, one writes $A \SJR B $ if for every order function $h$, there is a $B$-c.e., $h$-bounded trace $\seq{T_n}\sN n$ such that for each $n$,  if  $J^A(n)$  is defined then it is in~$T_n$.
\end{definition}
Note that a  set~$A$ is strongly jump traceable iff $A\SJR \emptyset$.  An argument similar to \cite[Theorem 3.3]{Ng:10} or \cite[8.4.14]{Nies:book} shows that the relation $\SJR$ is transitive.  
Clearly $X' \not \SJR X$  for each set $X$, because this already holds for $\le_\mathrm{JT}$. The relation $\SJR$  is  $\PI  5$. We also note that $\SJR$ is genuinely weaker than $\leT$, in the sense that for each set $B$ there is a set $C >_T B$ such that $C \SJR B$. To see this, by relativisation  of a result in~\cite{Figueira.ea:08} let $A$ be a set  such that $A \not \leT B$ and $A$ is  strongly jump traceable relative to $B$, and let $C= A\oplus B$.  Then for each order function $h$ relative to $B$, $J^C$ has a $B$-c.e.\ trace bounded by $h$. 
\begin{remark}
	A weak reducibility $\le_W$ induces the highness property of $\le_W$-hardness, namely the property of a set $A$  that $\Halt \le_W A$. This highness property  is in a sense dual to the lowness property that $\le_W$ is based on. Such highness properties have often been in the focus of research. Results  often state  coincidences between  highness properties; for instance,  a set is LR-hard iff it  is uniformly almost everywhere dominating~\cite{Kjos.Miller.ea:11}.  The  highness property of SJT-hardness was first studied     in \cite{Downey.Greenberg:13}, where the authors showed as their main result that  some incomputable, c.e.\ set is Turing below each c.e.\ SJT-hard set. 
\end{remark}

\subsection{Three lowness paradigms} \label{ss:LP}
As mentioned, three lowness paradigms for  a set $A\sub \NN$ have beeen proposed. We give a brief summary; see~\cite{Greenberg.Hirschfeldt.ea:12} for more detail.
\begin{enumerate}  \item    \textit{Weak as an oracle}: 
	\\ $A$ is not very useful as an oracle for Turing machines. 
	\item   \textit{Computed by many oracles}:  \\the collection of oracles computing $A$ is large. The class of such oracles    is  null   unless $A$ is computable, so largeness must be taken in a more specific sense. For instance, $A$ is  called a base for ML-randomness if there is  an oracle in $\MLR^A$ computing $A$.  
	\item    \textit{Inert}: \\ Recall that the Shoenfield limit lemma states  that a set  $A$ is $\DII$ iff it can be computably approximated with a finite number of changes. Inertness  means that there is such an approximation where the number of changes is in a sense small.  \end{enumerate}

 It is often interesting to ask whether a given lowness property has equivalent definitions according to   two or even all three  paradigms. We examine this for the property that a set $A$ is  low  for ML-randomness: every ML-random is ML-random relative to $A$.  \begin{enumerate}  \item  
  The  definition follows Paradigm~(1). Another equivalent characterisation according to this paradigm is lowness for $K$: there is a constant $d$ such that $K(x)\le K^A(x) + d$ for each string $x$, where $K$ denotes prefix free descriptive string complexity.
  \item  $A$ is low for ML-randomness iff  it is  a base for ML-randomness, which gives  a definition according to Paradigm~(2) (see \cite[Ch.\ 5]{Nies:book} for details and references).  
  \item  $A$ is low for ML-randomness iff it has a computable approximation with few changes  in the sense of the cost function $\cost_\Omega(x,s) = \Om_s- \Om_x$ by~\cite{Nies:17} ; this follows Paradigm~(3). An earlier version of this result for a  cost function~$\cost_K$ dates back to~\cite{Nies:AM}. \end{enumerate} We note that  lowness for ML-randomness also coincides with $K$-triviality,  a property expressing that the set is far from random~\cite{Downey.Hirschfeldt.ea:03,Nies:AM}.  
 
 Strong jump traceability  by its definition follows  Paradigm~(1). In~\cite{Greenberg.Hirschfeldt.ea:12,Diamondstone.Greenberg.ea:15} an equivalent characterisation following Paradigm~(2) is given: a set is strongly jump traceable if and only if it is computed by every $\omega$-c.a.\ (or every superlow, or every superhigh) ML-random set\footnote{Recall that a function $f\colon \NN\to \NN$ is \emph{$\omega$-computably approximable}, or $\omega$-c.a., if it has a computable approximation $f(x,s)$ such that $n\mapsto \#\left\{ s \,:\,  f(x,s)\ne f(x,s+1) \right\}$ is bounded by some computable function; equivalently,~$f\le_{\textup{wtt}} \emptyset'$. A set $X\subseteq \NN$ is $\omega$-c.a.\ if its characteristic function is~$\omega$-c.a. The terminology ``$\omega$-c.a.'' was introduced for clarity, since the term ``$\omega$-c.e.'' is often used for the class $\Sigma^{-1}_\omega$ in Ershov's effective difference hierarchy, which is a larger class of sets (the $\omega$-c.a.\ sets are those that are both $\omega$-c.e.\ and co-$\omega$-c.e.)}. A related result appears in \cite{Greenberg.Miller.ea:24}: a set is strongly jump-traceable if and only if it is computable from every infinite ``section'' of Chaitin's~$\Omega$. 
In \cite{Greenberg.Nies:11} it is shown that  a c.e.\ set is strongly jump traceable iff it obeys all 
benign cost functions; this yields a characterisation in terms of Paradigm (3).

\subsection{Characterising SJT-reducibility according to the three paradigms.} In this paper, we suggest to apply the three paradigms not only to lowness properties, but also to the corresponding weak reducibilities. Our main results show that, under certain restrictions, the characterisations of the strongly jump traceable sets according to the three paradigms can be lifted to the case of the weak reducibility~$\SJR$. 

The definition of $\SJR$ follows paradigm~(1), as does the definition of strong jump-traceability itself.  It indicates computational weakness of the oracle $A$ compared to $B$, because $J^A(n)$  is  caught in an ``arbitrarily small'' trace set enumerated by $B$. Our first result gives a characterisation according to Paradigm~(3): for every benign cost function $\cost$, the set $B$ computes an approximation of~$A$ for which the total cost of changes in the sense of~$\cost$ is finite.  For the precise definitions of the notation $A\models_B \cost$ and of benign cost functions, see \cref{def:cost_functions,def:benign} below.
\begin{theorem} \label{thm:cost_function_characterisation}
Suppose that both~$A$ and~$B$ are jump-traceable. The following are equivalent:
\begin{itemize}
\item[(a)] $A\SJR B$. 
\item[(b)] $A \models_B \cost$ for every benign cost function~$\cost$. 
\end{itemize}	
\end{theorem}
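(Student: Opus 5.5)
We prove the theorem by relativising the unrelativised equivalence of \cite{Greenberg.Nies:11}: a c.e.\ set is strongly jump traceable iff it obeys all benign cost functions. Here $A$ need not be c.e., and only the trace (respectively the approximation) is relativised to $B$. The jump-traceability hypotheses serve as a substitute for c.e.-ness and lowness. Since $A$ is jump traceable, it is $\omega$-c.a.\ relative to nothing in particular, but it is $\mathrm{GL}_1$. We will use the fact that $J^A$ has a computably bounded c.e.\ trace, which gives a computable bound on how many candidate values need to be considered. The hypothesis that $B$ is jump traceable (and hence $\mathrm{GL}_1$, with $B$-c.e.\ traces effectively converted into $\emptyset$-c.e.\ ones up to a computable blow-up) will be used to make the bookkeeping of $B$-c.e.\ objects effective enough for benign bounds to be computed.

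\emph{(b)$\Rightarrow$(a).} Fix an order function $h$. Following \cite{Greenberg.Nies:11}, we build a benign cost function $\cost_h$ by the ``box promotion'' style construction. Fix a computable approximation of $A$ coming from the jump-traceability of $A$, say via a $\Delta^0_2$-style guessing of $J^A(n)$ using the c.e.\ trace $\seq{V_n}$ of bounded size $g(n)$. Set $\cost_h(x,s)$ to be roughly $\sum 2^{-k}$ over those inputs $n$ with $h(n)\le k$ such that a computation $J^A(n)[s]$ with use $>x$ has appeared. Benignity follows because each input can only be charged by at most $g(n)$-many distinct values. Now let $\seq{A_s}$ be a $B$-computable approximation of $A$ obeying $\cost_h$. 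We let $T_n$ collect the values $J^{A_s}(n)[s]$ that are ``certified'' at stage $s$; the certification waits until the total cost already incurred is sufficient. Because the total cost is finite and bounded by $1$ after finitely many modifications, at most $h(n)$ values get enumerated for each $n$. This gives a $B$-c.e.\ $h$-bounded trace. The finitely many exceptional $n$ can be handled by hard-coding.

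\emph{(a)$\Rightarrow$(b).} Fix a benign cost function $\cost$ with computable bound $g$ on the number of disjoint intervals of cost $\ge 2^{-k}$. Mimic the golden-run / box-promotion argument. We define an $A$-partial computable function $\Psi^A$, via the recursion theorem (so that it becomes an input of $J^A$), which tests the current approximation of $A$ on $A$-intervals. Applying (a) with a sufficiently slow order function $h$ computed from $g$, we get a $B$-c.e.\ trace of $\Psi^A$. From that trace, $B$ builds its approximation of $A$: we accept a new approximation $A_s\restriction x$ only after the value $\Psi^A(n)$ coding it has appeared in the $B$-trace. Each $n$ then permits at most $h(n)$ changes. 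Since these are charged against inputs of size roughly $2^{-k}$, with $g(k)$ intervals available, the total cost is bounded by $\sum_k 2^{-k}$ times a constant.

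The approximation of $A$ itself needs a computable guessing procedure. Here we use that $A$ is jump traceable: $A$ is $\omega$-c.a.\ relative to $B$ only after we know $A\le_{T} B'$-style guessing is uniformly convergent, and the $\mathrm{GL}_1$ property of $A$ and $B$ lets $B$ produce the limit. The main obstacle is this direction. We must ensure that $B$'s enumeration of traced values actually converges to $A$ (correctness, not just few changes). The trace contains junk values, so the $B$-approximation must prefer values consistent with ever-longer initial segments. Handling this requires the Greenberg--Nies nested box system, adapted so that boxes are promoted using $B$-c.e.\ information while the sizes remain computable. I would first attempt to make the box sizes depend only on $g$ and the bound from jump-traceability of $A$.
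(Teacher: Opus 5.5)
\textbf{Your proposal has genuine gaps in both directions. The gap in (a)$\RA$(b) is the central difficulty of the theorem.}

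\textbf{(b)$\RA$(a).} You start from ``a computable approximation of $A$ coming from the jump-traceability of $A$''. Jump-traceable sets need not be $\DII$: they have a perfect $\PI 1$ subclass. Hypothesis (b) only makes $A$ a $\DII(B)$ set; take $A=B$ non-$\DII$. So a computable cost function cannot refer to computations $J^A(n)[s]$.

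Reading your definition as ``some computation $J^\sigma(n)$ whose value lies in the trace $V_n$ of $J^A$'' fails too. A single value can come from infinitely many uses $\sigma$ of unbounded length. So ``each input is charged by at most $g(n)$ values'' gives no benignity, and even the limit condition can fail.

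The missing idea is to trace the \emph{use}. Apply jump-traceability of $A$ to the $A$-partial computable function $\psi(n)=\sigma$, where $\sigma\prec A$ is the use of $J^A(n)$. This gives a c.e.\ trace $\seq{V_n}$ of candidate use strings, bounded by a computable $g$. Set $\cost(x,s)=\max\{1/h(n) : \exists \sigma\in V_{n,s}\ |\sigma|>x\}$. This is benign, because each interval of cost $\ge\epsilon$ needs a new string in $\bigcup\{V_n : 1/h(n)\ge \epsilon\}$.

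Certification for $T_n$ must then be ``the use of $J_s^{A_s}(n)$ lies in $V_{n,s}$'', not ``enough cost has been incurred''. With that, each new value forces a change below $|\sigma|$ costing at least $1/h(n)$, so $|T_n|\le h(n)+1$.

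\textbf{(a)$\RA$(b).} You rightly see that an adaptive box-promotion argument is needed, but you never say how it can run. The definition of the $A$-partial function must react to what shows up in the trace. That trace is only $B$-c.e., and neither $A$ nor the computable construction can see it. ``Promoting boxes using $B$-c.e.\ information'' is exactly what is unavailable.

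You gesture at the fix in your introduction (turning $B$-c.e.\ objects into c.e.\ ones via jump-traceability of $B$), but never use it where it matters. The paper ``traces the trace'':
\begin{itemize}
\item For every oracle $Y$ it defines an auxiliary $\Psi^Y$ recording the strings in $U^Y_x$ and the lengths $Y$ ``believes'' should be promoted.
\item It takes an unrelativised c.e.\ trace $\seq{V_y}$ of $\Psi^B$.
\item The computable construction of $\Phi$ adapts to $\seq{V_y}$.
\end{itemize}
The bound on $\seq{V_y}$ is an uncontrolled computable function, and oracles $Y\ne B$ may believe unboundedly many promotions. To handle this the paper needs three things:
\begin{itemize}
\item blocks $I_n$ sized from that bound;
\item hypercube testing on $J_n$ of size $2^{r_n}$, plus the tree-leaf lemma, to show $B$ believes at most $n-1$ promotions at level $n$;
\item care with the recursion theorem (the uniform family $h^e$ and the threshold $o$).
\end{itemize}

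Without this, your count ``each $n$ permits at most $h(n)$ changes'' is unjustified. Junk in a trace of size $>1$ can make $B$'s approximation switch candidates arbitrarily often. In the paper, the cost bound comes from showing that each change of cost $\ge 2^{-n}$ forces a new promotion by some module $m\le n$. There are fewer than $n^2$ of these, so the total cost is at most $\sum_n n^2 2^{-n}$.

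Finally, no $\mathrm{GL}_1$ guessing is involved. $B$ reads its approximation $\sigma^s_n$ directly from confirmations in $\seq{U^B_x}$, starting from a nonuniformly fixed $A\uhr{\hat\ell}$.
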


 We prove \cref{thm:cost_function_characterisation} in \cref{sec:benign_cost_functions}.

\medskip

Recall that $A \sub \NN$ is $K$-trivial if $K(A\uhr n) \lep K(n)$ for each $n$. There are strongly jump traceable sets outside the computable~\cite{Figueira.ea:08}, 
yet the class lies far inside the $K$-trivials~\cite{DG:SJT2,Greenberg.Miller.etal:19}. Recent work \cite{Greenberg.Miller.etal:19,Greenberg.Miller.ea:24} reveals a rich structure inside the $K$-trivial Turing degrees, based on both cost functions and computability from various random sets. This, and the existing relationship between strong jump-traceability and $K$-triviality, leads us to expect that the weak reducibility $\SJR$ will also shed new light on the $K$-trivial sets and degrees. Our second result gives a variety of characterisations of $\SJR$ within the $K$-trivial sets, according to Paradigm~(2). 
We quickly recall the definition of some  other classes that will be required.
\begin{itemize}
	\item A set~$Y$ is superhigh if $\emptyset'' \ltt Y'$. 
	\item A set~$Y$ is $\omega$-c.a.\ if its characteristic function is $\omega$-c.a.; equivalently if \mbox{$Y\lwtt \emptyset'$}. 
	\item A set~$Y$ is {superlow} if $Y'\ltt \emptyset'$; equivalently, if $Y'$ is $\omega$-c.a.
	\item Given an infinite set  $R\subseteq \NN$, let  $\Omega_R$ be the result of removing from~$\Omega$ all the bits in locations not in~$R$. As mentioned above, here $\Omega$ is any left-c.e.\ ML-random set. 
\end{itemize}
\begin{theorem} \label{thm:random_computing_characterisation}
Let $\+C$ be any of the following classes of sets:
\begin{itemize}
\item[(1)] sets that are not weakly Demuth random;
\item[(2)] superhigh sets;
\item[(3)] $\omega$-c.a.\ sets;
\item[(4)] superlow sets;
\item[(5)] the sets $\Omega_R$, where~$R$ is infinite and computable. 
\end{itemize}	
Then the following are equivalent for $K$-trivial sets~$A$ and~$B$:
\begin{itemize}
\item[(a)] $A\SJR B$. 
\item[(b)] $A \le_T B \oplus Y$ for every \ML-random set $Y\in \+C$. 
\end{itemize}	
\end{theorem}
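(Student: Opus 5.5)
We will go through \cref{thm:cost_function_characterisation}, which applies here because $K$-trivial sets are superlow and hence jump-traceable. The task then becomes showing that, for $K$-trivial $A,B$, the condition $A\models_B \cost$ for all benign $\cost$ is equivalent to (b), for each of the classes $\+C$. The plan is to relativise the known unrelativised results: the theorem that a set is strongly jump traceable iff it is computable from every ML-random set in $\+C$ \cite{Greenberg.Hirschfeldt.ea:12,Diamondstone.Greenberg.ea:15,Greenberg.Miller.ea:24}, and the theorem that SJT c.e.\ sets are exactly those obeying all benign cost functions \cite{Greenberg.Nies:11}. The twist is that the oracle~$B$ joins only on the side of the reduction. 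Randomness, the class $\+C$, and the bound in benignity all stay unrelativised. The $K$-triviality of $B$ is what makes this partial relativisation workable: $B$ is low for ML-randomness, so every ML-random $Y$ is $\MLR^B$.

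\textbf{(a)$\Rightarrow$(b).} Let $Y\in\+C$ be ML-random. First we build a benign cost function $\cost_Y$ adapted to~$Y$, in the style of the Demuth/$\Omega_R$ arguments. If $Y$ is not weakly Demuth random, there is a Demuth test $\seq{G_n}$ with computably bounded version changes capturing $Y$. The superlow, superhigh and $\omega$-c.a.\ cases reduce to this, as in \cite{Diamondstone.Greenberg.ea:15}; the $\Omega_R$ case uses the approximation of $\Omega$ restricted to $R$. We take $\cost(x,s)$ to be the measure of the current version of the relevant test component at stage $s$, ensuring the benign bound. By (a) and \cref{thm:cost_function_characterisation}, $B$ computes an approximation $\seq{A_s}$ of $A$ with finite total $\cost$-cost. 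We then run the usual argument. Each change of $A_s\uhr x$ enumerates the corresponding clopen set into a $B$-Solovay test of finite total measure. Since $Y$ is ML-random relative to $B$ (low for randomness), $Y$ lies in only finitely many of these sets. So $B\oplus Y$ can compute $A\uhr x$ by waiting for a stage at which $Y$ is in no later enumerated set, beyond a finite exception.

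\textbf{(b)$\Rightarrow$(a).} Assume $A\not\SJR B$, and fix a benign $\cost$ with $A\not\models_B\cost$. We must build an ML-random $Y\in\+C$ with $A\not\leT B\oplus Y$. Our first attempt is a single Demuth-style (or $\omega$-c.a.) construction, since the $\omega$-c.a.\ random sets are superlow-avoiding issues aside. Following the hungry-sets/golden-run approach of \cite{Greenberg.Hirschfeldt.ea:12}, we build a $\emptyset'$-computable-with-bounded-changes random $Y$ that defeats each functional $\Phi_e^{B\oplus Y}$. The obstacle is that $B$ is not computable, so we cannot see $\Phi^{B\oplus Y}$ directly. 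To get around this, we use $K$-triviality of $B$ through its $\cost_\Omega$-obeying approximation: we guess $B$'s initial segments and charge the measure lost on wrong guesses to the $\cost_\Omega$-cost of $B$, which is finite. The failure $A\not\models_B\cost$ then says that every $B$-approximation of $A$ incurs infinite $\cost$-cost. This is what lets us pass from the requirement ``$\Phi^{B\oplus Y}\ne A$'' to measure-bounded removals of $Y$-candidates, as in the proof that computability from all $\omega$-c.a.\ randoms implies obeying benign cost functions.

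We then transfer to the other classes. Class (3) is handled by the construction above. For class (4) we make $Y$ superlow by the standard low-basis-with-bounds modification. For class (2) we take superhigh randoms from a relativised construction, or use that (b) for superhigh implies it for $\omega$-c.a.\ through the Diamondstone--Greenberg--Turetsky covering results. Class (1) follows by containment, since $\omega$-c.a.\ sets are not weakly Demuth random. Class (5) requires separately choosing $R$ computable so that $\Omega_R$ is the needed random, following \cite{Greenberg.Miller.ea:24}. The main obstacle will be the (b)$\Rightarrow$(a) direction, specifically handling the oracle $B$ inside the golden-run construction while keeping the version-change bound computable.
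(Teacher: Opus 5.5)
Your (a)$\Rightarrow$(b) is essentially the paper's argument. You reduce to the non-weakly-Demuth-random case, turn a nested Demuth test capturing $Y$ into a benign cost function, apply the cost-function characterisation, and use that $B$ is low for ML-randomness so that $Y$ escapes the resulting $B$-Solovay test. Three points need fixing:
\begin{itemize}
\item The cost function should be the specific one $\cost(k,t)=\leb V_{k,t}$. Here $V_{k,t}$ is the union over $s\in(k,t]$ of the component of least index restarted during $(k,s]$.
\item The decoding should be ``search for $t\ge s_0$ with $Y\in V_{x,t}$ and output $A_t(x)$''. Such a $t$ exists because $Y\in\bigcap_m G_m$, which is why a nested test is used. ``Wait for a stage after which $Y$ enters no later set'' is not effective.
\item Case (2) needs the Ku\v{c}era--Nies fact that superhigh sets are not weakly Demuth random.
\end{itemize}
(Minor: superlow implies jump-traceable only for c.e.\ sets; $K$-trivials are jump-traceable by Nies's theorem.)

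The genuine gap is in (b)$\Rightarrow$(a) for classes (1)--(4). You give no actual construction, and your one concrete mechanism does not deliver what is needed. Guessing $B$ along a $\cost_\Omega$-obeying approximation and charging wrong guesses to its cost bounds only the total measure of errors. A Demuth test, however, needs a computable bound on the number of restarts of each component, and so does any conclusion that $Y$ is $\omega$-c.a., superlow or superhigh. The $B$-use in enumerating $\{Z:\sigma\preceq\Psi^{Z\oplus B}\}$ up to measure $2^{-m}$ is not computably bounded. You also never say how ``every $B$-approximation of $A$ has infinite $\cost$-cost'' is used, or how the dependence of the test on $A$ is removed.

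The paper avoids cost functions here and relies on two partial-lowness facts:
\begin{itemize}
\item Let $I_m=\{x:2^m\le h(x)<2^{m+1}\}$ and let $\sigma_m\prec A$ be the $J^A$-use on $I_m$. The map $m\mapsto\sigma_m$ is $\BLR(A)$, hence $\omega$-c.a., because $A$ is low for $\BLR$ (Kjos-Hanssen--Nies). This takes $A$ out of the test.
\item $G_m=U_{\sigma_m}^{(\le 2^{-m})}$ is then a $\DemBLR\la B\ra$ test. Since $B$ is low for $\DemBLR$ (Bienvenu--Downey--Greenberg--Nies--Turetsky), it is covered by a plain Demuth test.
\end{itemize}
Enumerating $J^{\sigma_{m,s}}(x)$ into $T_x$ when $G_{m,s}$ is full gives an $h$-bounded $B$-c.e.\ trace of $J^A$ whenever $A=\Psi^{Y\oplus B}$ with $Y$ passing the test. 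A universal functional yields a single test. Nies's result that the superlow and superhigh classes are Demuth-compatible then supplies a random $Y\in\mathcal C$ passing it. Your ``low basis with bounds'' step cannot replace this, since the sets passing a Demuth test are in general not a $\Pi^0_1$ class.

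Case (5) is not argued at all; ``choose $R$ so that $\Omega_R$ is the needed random'' is a placeholder. The paper chooses $R$ from the benign $\cost$ so that $\lim_s\cost(n,s)\le^\times\lim_s\cost_{\Omega,R}(n,s)$. It then proves directly that $A\le_\Tur B\oplus\Omega_R$ implies $A\models_B\cost_{\Omega,R}$, partially relativising the ravenous-sets Lemma~6.9 of Greenberg--Miller--Nies--Turetsky. That proof needs:
\begin{itemize}
\item the relativised density and difference-test characterisations;
\item the Day--Miller theorem that $\Omega_R\oplus B\not\ge_\Tur\emptyset'$ for $K$-trivial $B$ and co-infinite $R$;
\item a reduction to $B$-c.e.\ $A$, via a partially relativised golden-run lemma and the Barmpalias--Downey bound $\Omega-\Omega_n\le^\times\Omega-\Omega_{\psi(n)}$.
\end{itemize}
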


Note that every $K$-trivial set is jump traceable. This was first shown by Nies~\cite{Nies:AM}, improving a result of  Zambella (see \cite{Terwijn:phd}). For weak Demuth randomness, see for example~\cite{Bienvenu.Downey.ea:14}; we give a detailed definition in \cref{sec:randoms_computing} (\cref{ref:weak_Demuth}).


\subsection{The difficulties with partial relativisation}

As the restrictions in the two theorems above indicate, lifting the various characterisations of strong jump traceability to the weak reducibility $\SJR$ is far from straightforward. Since arguments in computability theory almost always relativise, the difficulty lies in the fact that $\SJR$ is a partial relativisation. 

One of the difficulties is in modifying \emph{adaptive} arguments. In such an argument, we typically work with a strongly jump-traceable oracle~$A$, and define an $A$-partial computable function~$\psi$. By the recursion theorem, during the construction we have access to a c.e.\ trace~$\seq{T_n}\sN n$ for~$\psi$, and we use that trace to dynamically define~$\psi$: the definition of~$\psi$ \emph{adapts} to the enumerations appearing in $\seq{T_n}\sN n$. When $B\ne\emptyset$, more specifically, when~$A$ does not compute~$B$, the enumeration of $\seq{T_n}\sN n$  is only $B$-computable, and so~$A$ does not have such access to the trace. In a full relativisation to~$B$ we use $A\oplus B$ rather than~$A$, which allows us to use adaptive arguments. 

Similar difficulties occur in the study of other weak reducibilities such as $\le_{\LRR}$ mentioned above. However, our characterisation in \cref{thm:cost_function_characterisation} will show that this problem does not occur with $\SJR$ (under the same assumption on $A\oplus B$):

	\begin{theorem} \label{thm:A_to_A_join_B}
	Suppose that $A\oplus B$ is jump-traceable. The following are equivalent:
	\begin{itemize}
	\item[(a)] $A\SJR B$. 
	\item[(b)] $A\oplus B\SJR B$. 
	\end{itemize}	
\end{theorem}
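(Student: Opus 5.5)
The implication (b)$\Rightarrow$(a) is immediate. From $A\oplus B \SJR B$ we get $A\SJR B$ via $A\leT A\oplus B$ and transitivity of $\SJR$. More directly, $J^A(n)=J^{A\oplus B}(g(n))$ for a computable increasing $g$. Given an order $h$, let $\tilde h(m)=h(g^{-1}(m))$, suitably defined so that it is an order function with $\tilde h(g(n))\le h(n)$. A $\tilde h$-bounded $B$-c.e.\ trace for $J^{A\oplus B}$, restricted to the positions $g(n)$, then traces $J^A$ with bound $h$.

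For (a)$\Rightarrow$(b) the plan is to go through \cref{thm:cost_function_characterisation}, twice. Assume $A\SJR B$, with $A\oplus B$ jump-traceable, so that $A$ and $B$ are jump-traceable too (jump traceability is downward closed under $\leT$). By (a)$\Rightarrow$(b) of \cref{thm:cost_function_characterisation}, $A\models_B\cost$ for every benign cost function $\cost$. We want to show $A\oplus B\models_B \cost$ for every benign $\cost$; then (b)$\Rightarrow$(a) of \cref{thm:cost_function_characterisation}, applied to the pair $(A\oplus B,B)$, gives $A\oplus B\SJR B$. That application is legitimate because both $A\oplus B$ and $B$ are jump-traceable.

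The key step is the lemma that $A\models_B\cost$ implies $A\oplus B\models_B\cost$. Intuitively this holds because $B$ knows itself exactly, so the $B$-half of the join never needs to change. Concretely, let $\seq{A_s}$ be the $B$-computable approximation of $A$ witnessing $A\models_B\cost$, with $\sum_s \cost(x_s,s)<\infty$ where $x_s$ is least with $A_s(x_s)\ne A_{s+1}(x_s)$. Define the $B$-computable approximation $(A\oplus B)_s = A_s\oplus B$. Its changes at stage $s$ occur at position $2x_s$ when $A$ changes at $x_s$, and never otherwise.

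The main obstacle is that the change location $2x_s$ differs from $x_s$, while cost functions are only monotone in the sense that $\cost(x,s)\ge \cost(y,s)$ for $x\le y$. So $\cost(2x,s)\le \cost(x,s)$, which is the right direction for the total cost to stay finite, provided the definition of $\models_B$ in \cref{def:cost_functions} counts cost at the least changed position with monotone $\cost$. If the paper's conventions instead use the pairing differently, or require the approximation to converge in some specific format (for instance that it be a $B$-computable approximation with $A_s$ finite, or indexed by use), I would check those. One possible fix is to pad $B$ by revealing $B\uhr{s}$ at stage $s$; this is harmless since the $B$-positions of the join are then correct from the start. Benignity of $\cost$ plays no role in this lemma; it is needed only to invoke \cref{thm:cost_function_characterisation}.
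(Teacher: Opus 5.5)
Your proposal is correct and follows the paper's argument: (b)$\Rightarrow$(a) via $A\leT A\oplus B$, and (a)$\Rightarrow$(b) by applying \cref{thm:cost_function_characterisation} in both directions, using the $B$-computable approximation $A_s\oplus B$, whose cost at each stage is $\cost(2x_s,s)\le\cost(x_s,s)$ by monotonicity. The paper's \cref{def:cost_functions} charges the cost at the least changed position, and $\cost$ is nonincreasing in the first variable, so your main argument goes through as written and the fallback of padding $B$ is not needed.
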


We discuss \cref{thm:A_to_A_join_B}, and other corollaries of our characterisations, in \cref{sec:conclu}. Note that since Turing reducibility implies $\SJR$, $A\oplus B\SJR B$ if and only if $A\oplus B \equiv_{\textup{\scriptsize{SJT}}} B$. Regarding $\SJR$ and joins, see   \cref{sec:conclu}, in particular \cref{question:join} and the discussion thereafter.



\section{Benign cost functions}
\label{sec:benign_cost_functions}

We recall the definitions of cost functions and approximations obeying them; for background see \cite[Ch.\ 5]{Nies:book} or \cite{Nies:17}. 

\begin{definition} \label{def:cost_functions}
	A \emph{cost function} is a computable function $\cost \colon  \NN\times \NN \to \QQ^+$ satisfying:
	\begin{itemize}
		\item Monotonicity: $\cost$ is nonincreasing in the first variable and nondecreasing in the second. 
		\item The limit condition: for all~$x$, $\lim_s \cost(x,s)$ exists, and $\lim_x \lim_s \cost(x,s)=0$.
	\end{itemize}

	Let $\seq{A_s}$ be an approximation of a set~$A$. The \emph{total $\cost$-cost} of the approximation is the sum
	\[
		\sum_s \cost(x_s,s),
	\]
	where $x_s = |A_s\wedge A_{s+1}|$ is the least~$x$ such that $A_s(x)\ne A_{s+1}(x)$. 

	Let $A$ and~$B$ be sets. We write $A \models_B \cost$ if there is a $B$-computable approximation $\seq{A_s}\sN s$ of $A$ for which the total $\cost$-cost is finite. 
\end{definition}

One views $\cost(x,s)$ as the cost of changing an approximation at $x$ as the least element, at a stage $s$. The limit condition says that for large enough $x$, changing $A(x)$ at any stage is as cheap as one likes.

\begin{definition}[\cite{Greenberg.Nies:11}] 
\label{def:benign}
A cost function~$\cost$ is \emph{benign} if given a  rational  $\epsilon>0$, we can compute a bound on the length of any sequence $x_1 < s_1 \le x_2 < s_2 \le \cdots \le x_{\ell} < s_{\ell}$ such that $\cost(x_{i},s_{i})\ge \epsilon$ for all $i\le \ell$.
\end{definition}	

For example, the cost function $\cost_{\Omega}(x,s) = \Omega_s - \Omega_x$ is benign, with the bound for~$\epsilon$ being $1/\epsilon$. Here $(\Omega_s)$ is an increasing computable approximation of the left-c.e., ML-random sequence~$\Omega$.

\subsection{A proof of \cref{thm:A_to_A_join_B}} \label{ss:proof Thm 1.6}

We first explain how \cref{thm:A_to_A_join_B} follows from \cref{thm:cost_function_characterisation}. In one direction, suppose that $A\oplus B \SJR B$; since $A\le_{\textup{T}} A\oplus B$ and $\le_{\textup{T}}$ implies $\SJR$, we have $A\SJR B$. 

In the other direction, suppose that $A\oplus B$ is jump-traceable and that $A\SJR B$. The assumption on $A\oplus B$ implies that both~$A$ and~$B$ are jump-traceable, so \cref{thm:cost_function_characterisation} applies. Hence, $A\models_B \cost$ for every benign cost function~$\cost$. This implies that $A\oplus B\models_B \cost$ for every benign cost function~$\cost$: fix such a cost function~$\cost$. Suppose that $(A_s)$ is a $B$-computable approximation of~$A$ witnessing that $A\models_B \cost$. Let $C_s = A_s\oplus B$. The fact that $\cost$ is monotonic implies that the total $\cost$-cost of $(C_s)$ is bounded by the total $\cost$-cost of~$(A_s)$, and so is finite. 

The assumption on~$A\oplus B$, together with the other direction of \cref{thm:cost_function_characterisation}, now shows that $A\oplus B\SJR B$. 

\medskip

In the next two  sections  we give a proof of \cref{thm:cost_function_characterisation}.
\subsection{Proof of (b)$\RA$(a) of \cref{thm:cost_function_characterisation}}
In this direction, in fact, we do not make use of the assumption that~$B$ is jump-traceable. \cite[Prop.\ 2.1]{Greenberg.Nies:11} gives this direction for the case $B=\emptyset$, under the extra assumption that~$A$ is c.e.\ That proof does not partially relativise; however, we can now present a simpler argument that applies more generally. 

Suppose that~$A$ is jump-traceable, and that $A\models_B \cost$ for every benign cost function~$\cost$. Let~$h$ be an order function; we show that $J^A$ has a $B$-c.e.\ trace bounded by~$h$. We define an $A$-partial computable function~$\psi$ by setting $\psi(n) = \sigma$ if $J^A(n)\converge$ with use $\sigma\prec A$. Since~$A$ is jump-traceable, the partial function~$\psi$ has a c.e.\ trace~$\seq{V_n}\sN n$, bounded by some computable order function~$g$. 

We define a cost function~$\cost$ as follows: 
\[
	\cost(x,s) = \max \left\{ 1/h(n) \,:\,  \exists \sigma\in V_{n,s}\text{ s.t. }|\sigma|>x \right\}. 
\]
The function $\cost$ is monotonic: the fact that $V_{n,s}\subseteq V_{n,t}$ when $s<t$ implies that it is increasing in the second variable; it is clearly decreasing in the first variable. It satisfies the limit condition because $h$ is unbounded, and each~$V_n$ is finite. We show that~$\cost$ is benign. Let $\epsilon>0$. Suppose that $s< y \le t$ and that $\cost(y,t)\ge \epsilon$. There is some~$n$ with $1/h(n)\ge \epsilon$ and some $\sigma\in V_{n,t}$ with $|\sigma|>y$. Since $|\sigma|>s$ we may assume that $\sigma\notin V_{m,s}$ for any~$m$. Hence, the length of a sequence $x_1< s_1 \le x_2 < s_2 \le x_3 < s_3 \le \cdots$ with $\cost(x_i,s_i)\ge \epsilon$ is bounded by the size of
\[
	\bigcup \left\{ V_{n} \,:\,  1/h(n)\ge \epsilon \right\},
\]
which is bounded by 
\[
	\sum \left\{ g(n) \,:\,  1/h(n)\ge \epsilon \right\}. 
\]
Since~$h$ is computable, non-decreasing and unbounded, and~$g$ is computable, this sum is finite and can be obtained computably from~$\epsilon$. 

Now suppose that $(A_s)$ is a $B$-computable approximation whose total $\cost$-cost is finite; by ignoring finitely many stages of the approximation, we may assume that the total cost is bounded by~1. For each~$n$, let $T_n$ be the set of values $J_s^{A_s}(n)$ for~$s$ such that $J_s^{A_s}(n)\downarrow$ with use $\sigma\prec A_s$, and $\sigma\in V_{n,s}$.  If $J^A(n)\downarrow$ then since $\psi(n)\in V_n$, we have $J^A(n)\in T_n$. Let $s<t$ be two stages responsible for enumerating new values into~$T_n$; let $\sigma\prec A_s$ with $J_s^{A_s}(n) = J^\sigma(n)$ and $\s\in V_{n,s}$. By definition, $\cost(x,s)\ge 1/h(n)$ for every $x<|\sigma|$. Since $J_t^{A_t}(n)\ne J_s^{A_s}(n)$, we have $\sigma\nprec A_t$, so there is some stage $r$ with $s\le r <t$ and some $x<|\sigma|$ such that $A_r(x)\ne A_{r+1}(x)$. Thus, the contribution of stage~$r$ to the total $\cost$-cost of the approximation $(A_s)$ is at least $1/h(n)$. This shows that $|T_n|\le h(n)+1$. Replacing $h$ by $h-1$ gives the desired result.

\subsection{Proof of (a)$\RA$(b) of \cref{thm:cost_function_characterisation}}

We turn to the proof of the harder direction of \cref{thm:cost_function_characterisation}. The case $B=\emptyset$ is Theorem 1.12 of \cite{Diamondstone.Greenberg.ea:15}. That argument is adaptive; we overcome this by using the jump-traceability of~$B$. Very roughly, we define an $A$-partial computable function $\Phi^A$, which will have a $B$-c.e.\ trace~$\seq{U_x^B}\sN x$. Instead of directly defining values of $\Phi^A$ in reaction to elements being enumerated into the trace, we use the fact that~$B$ is jump-traceable to ``trace the trace''; we will have a c.e.\ trace~$\seq{V_y}\sN y$, with no oracle, that essentially traces values in~$\seq{U_x^B}\sN x$, and we define $\Phi^A$ in reaction to values showing up in~$\seq{V_y}\sN y$. The thing to note is that the size of~$\seq{U_x^B}\sN x$ will be bounded by a very slow-growing order function, making use of the assumption $A\SJR B$. However, we do not control the computable bound on the size of~$\seq{V_y}\sN y$; this bound can grow very quickly, as the assumption is only that $B$ is jump-traceable, not strongly so. Observe that in this direction, $A$ being jump-traceable follows from~$B$ being jump-traceable and $A\SJR B$, since the latter implies $A\le_{\textup{JT}} B$. 

\medskip

We will need a combinatorial lemma. 
\begin{lemma}\label{lem:binary_tree_size}
Suppose that $T \subset \omega^{<\omega}$ is a finite tree, and $v_0, \dots, v_{n-1} \in T$ are pairwise distinct. If each $v_i$ has at least two  children in $T$, then $T$ has at least $n+1$ leaves.
\end{lemma}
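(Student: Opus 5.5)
I will prove \cref{lem:binary_tree_size} by counting edges and vertices. The approach is a degree count; induction on~$n$ is a fallback.

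Work in the finite rooted tree~$T$, where each non-root node has exactly one parent. Let $L$ be the set of leaves and $I = T\setminus L$ the set of internal nodes, i.e.\ those with at least one child. For $v\in T$ write $c(v)$ for the number of children of~$v$ in~$T$. Every node except the root is the child of exactly one node, so
\[
	\sum_{v\in T} c(v) = |T| - 1 = |L| + |I| - 1 .
\]
Since $T$ is a tree in $\omega^{<\omega}$ (closed under initial segments), it is nonempty as soon as $n\ge 1$, and the identity applies. For $n=0$ the claim says $T$ has at least one leaf, which holds for any nonempty finite tree; if $T=\emptyset$ is allowed with $n=0$ we should just note that convention.

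Next bound the left-hand side from below. The nodes $v_0,\dots,v_{n-1}$ are distinct and each has $c(v_i)\ge 2$, so each lies in~$I$. Every other node of~$I$ contributes $c(v)\ge 1$, and leaves contribute~$0$. Hence
\[
	\sum_{v\in T} c(v) \ge 2n + (|I| - n) = |I| + n .
\]
Comparing the two displays gives $|L| + |I| - 1 \ge |I| + n$, that is, $|L|\ge n+1$, as required.

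There is no real obstacle here. The only point needing care is the bookkeeping identity, namely that the number of parent--child edges is $|T|-1$, which holds because $T$ is closed under initial segments and so connected through the root. An alternative is induction: remove a subtree hanging below one child of a deepest~$v_i$, then count leaves. The edge count is cleaner, and it is the proof I would write.
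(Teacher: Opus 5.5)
Your proof is correct, and it takes a different route from the paper, which only remarks that the lemma can be proved by induction on~$n$ and omits the details. That induction is essentially your fallback. Pick some $v_j$ that no other $v_i$ properly extends, and delete the subtree of extensions of one child $w$ of $v_j$; this subtree contains a leaf of~$T$ but none of the $v_i$. Then drop $v_j$ from the list and apply the induction hypothesis to the smaller tree. One has to check that the other $v_i$ keep all their children and that $v_j$ does not become a leaf.

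Your double count avoids this surgery. It gives the exact identity $|L| = 1+\sum_{v\in I}(c(v)-1)$, from which the bound is immediate. The identity also shows exactly when equality holds: each $v_i$ has exactly two children and every other non-leaf has exactly one, whether or not the $v_i$ are comparable. For example, the full binary tree of height~$2$, with $v_0,v_1,v_2$ its three non-leaves, has $4$ leaves. So the chain configuration the paper mentions is only one instance of the extreme case.

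Finally, your count uses only that every non-root node has a unique parent, not closure under initial segments. This matters because the paper applies the lemma to $T=\{\sigma\uhr{\ell_i} : \sigma\in Z_1,\ i\le n\}$. That set is a tree only under the prefix order, with ``parent'' meaning the longest proper prefix lying in~$T$.
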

This can be proved by induction on $n$; we omit the details. The extreme case is when the nodes~$v_i$ form a chain (they are all comparable). 

\medskip

Fix a benign cost function $\cost$. There is an $\omega$-c.a.\ function  $f$ such that  $c(f(n)) < 2^{-n}$ for all $n$. There is a computable binary function, also denoted $f$, such that $\lim_s f(n, s) = f(n)$ for all $n$, and such that for each $n$, $|\{ f(n, s) : s \in \omega\}| \le g(n)$, where $g$ is some total computable function.  We may assume that $f(n,s)$ is non-decreasing in both $n$ and $s$. 

\medskip

Let us discuss the structure of the argument. It has two steps:
\begin{enumerate}
	\item[(1)] Enumeration of the functionals~$\Phi$ and~$\Psi$;
	\item[(2)] Using the results of~(1) to define the approximation of~$A$. 
\end{enumerate}
The first step is a computable construction, and as we later discuss, it has to be uniform in the indices for~$\Phi$ and~$\Psi$. The second step is computable in~$B$, and uses the correct indices of~$\Phi$ and~$\Psi$, so is non-uniform in that sense too. 

\smallskip

As discussed, the argument is adaptive, in that in the definition of $\Phi$ and~$\Psi$, we will make use of a trace~$\seq{V_y}\sN y$ that traces $\Psi^B$, and an oracle trace~$\seq{U_x^-}\sN x$ such that~$\seq{U_x^B}\sN x$ traces $\Phi^A$. Working with partial functions allows us to achieve this using the recursion theorem (as in \cite{Cholak.Downey.ea:08}), however the method of a universal trace would also work here (see for example \cite{Greenberg.Nies:11}).

\smallskip

The general idea (as in \cite{Diamondstone.Greenberg.ea:15}) is to \emph{test} potential initial segments of~$A$ by setting $\Phi^\s(x)=\s$ for all~$\s$ of some particular length~$\ell$, on a variety of locations (inputs)~$x$. When $B=\emptyset$, we can observe the result of such a test by seeing which~$\s$ are enumerated into $U_x$; these values are preferred to others. The test is more useful when the bound on $|U_x|$ is smaller, for example, when $|U_x|=1$ we know that the unique~$\s$ that appears in~$U_x$ must be an initial segment of~$A$. We will start by testing the length $\ell = f(n)$ on inputs~$x$ with $|U_x|\le n$. Of course, we do not know the final value $f(n)$, so we will need to test all lengths $f(n,s)$, for all~$s$, on such inputs~$x$. Once we test a certain length on~$x$, we cannot use the same input~$x$ again for other lengths, so we need to ensure that we have sufficiently many~$x$ with $|U_x|\le n$. Luckily, we control the bound on the size of~$U_x$, and we also know that we will encounter at most $g(n)$ many values $f(n,s)$, so this allows us to reserve sufficiently many~$x$ for this purpose. 

 The heart of the construction is a \emph{promotion} process for various lengths that we test. When $|U_x|>1$, we may get more than one result, $\s_0$ and~$\s_1$, both enumerated in $U_x$, and we need to devise a procedure that will tell us which of these we will believe more. In such a situation, we will (very roughly) promote the length $\ell = |\s_0| = |\s_1|$ from level~$n$ testing to level $(n-1)$-testing, which gives us stronger results (the actual details will be a bit different). To know that we have sufficiently many locations~$x$ for these extra tests, we need to bound the number of lengths promoted from level~$n$ to level $n-1$. To do this, some of our tests will not use a single location~$x$, but many such locations, and we will only trust some~$\s$ if it appears in \emph{all} $U_x$ on which it was tested. 

 This process of promotion is where the construction becomes adaptive: based on enumerations into various $U_x$, we decide to make further tests, defining more values of $\Phi^A$.  The overall structure of our construction, when $B\ne \emptyset$, is the same, except that, as discussed, we cannot examine $U_x^B$ during the construction. Rather, for any oracle~$Y$ (uniformly), we will define~$\Psi^Y$ while examining~$\seq{U_x^Y}\sN x$. As mentioned, the whole construction of~$\Phi$ and~$\Psi$ will also, adaptively, examine values in another trace~$\seq{V_y}\sN y$ (that has no oracle, and traces the ``correct'' $\Psi^B$). 

\smallskip

For the definition of~$\Psi$, we computably partition $\NN$ into sets $R_x$ (for $x\in \NN$) and $Q_n$ (for $n\in \NN$), such that $|R_x| = x$ and $|Q_n| = n$. We will shortly see how using the recursion theorem, during the construction, we have a c.e.\ trace~$\seq{V_y}\sN y$ for $\Psi^B$, and a computable bound~$h$ of~$\seq{V_y}\sN y$. 

Given~$h$, for the purpose of defining~$\Phi$, we now computably partition~$\NN$ to sets $I_n$ and~$J_n$, so that:
\[
	|I_n|  = g(n) + \sum \big\{ h(y)\,:\, y\in Q_{n+1}  \big\},
\]
and $|J_n| = 2^{r_n}$, where
\[
	r_n = \sum \Big\{ h(y) \,:\,  y\in \bigcup_{x\in I_n} R_x \Big\}. 
\]
\begin{benignclaim}[Number $o$ and trace~$\seq{U_x^-}\sN x$] \label{cl:witzi} During the construction, we have a number~$o$ and an oracle-c.e.\ trace~$\seq{U_x^-}\sN x$ such that for all $n>o$, for all $x\in I_n\cup J_n$, 
\begin{itemize}
	\item for all~$Y$, $|U_x^Y|\le n$; and
	\item If $\Phi^A(x)$ is defined, then $\Phi^A(x)\in U_x^B$. 
\end{itemize}
\end{benignclaim}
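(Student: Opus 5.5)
The claim is obtained by combining the recursion theorem with the hypothesis $A\SJR B$, in the standard way for adaptive arguments of this kind (as in \cite{Cholak.Downey.ea:08}). The construction is uniform in an index $e$, which we think of as a guess for an oracle-c.e.\ trace. So I would first make the construction of $\Phi$ (and $\Psi$) a uniform computable procedure that takes as parameters an index $e$ for an oracle-c.e.\ sequence $\seq{W^-_{e,x}}\sN x$ and a number $o$. The computable function $h$ bounding $\seq{V_y}\sN y$, and hence the partition into $I_n$, $J_n$, are fixed before $e$ enters, so the sets $I_n,J_n$ do not depend on $e$. We then define a computable order function $k$ with $k(x)=n$ for $x\in I_n\cup J_n$. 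Since $|I_n|$ and $|J_n|$ are finite and computable from $n$, and the blocks are arranged in increasing order of $n$, $k$ is indeed computable, nondecreasing and unbounded. Increasing the blocks is harmless, so we may arrange that the blocks $I_n\cup J_n$ are consecutive intervals.

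The next step handles the size bound uniformly in $Y$. From $e$ we effectively produce a trimmed oracle trace $U^Y_x$: enumerate $W^Y_{e,x}$ but stop accepting new elements once $k(x)$ of them have appeared. Then $|U^Y_x|\le k(x)=n$ for every oracle $Y$ and every $x\in I_n\cup J_n$, with no condition on $n$ needed. Moreover, whenever $W^B_{e,x}$ already has size at most $k(x)$, we have $U^B_x=W^B_{e,x}$.

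Now let $\Phi_e$ denote the functional built by the construction using the trimmed trace obtained from $e$. The map $e\mapsto$ (an index of $\Phi_e^A$ as an $A$-partial computable function, via $J^A$) is computable. Here the main point is the reduction from tracing $\Phi^A$ to tracing $J^A$. There is a computable injection $p$ with $J^A(p(e,x))=\Phi^A_e(x)$. Apply $A\SJR B$ with an order function $h'$ chosen so that $h'(p(e,x))\le k(x)$ for $x$ large. Since $p(e,\cdot)$ can be taken to grow fast, such an $h'$ can be chosen independently of $e$, by slowing $h'$ down enough, for instance $h'(m)=\min\{k(x): p(e',x)\ge m \text{ for some } e'\le x\}$, adjusted so that it is a genuine order function. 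This gives a $B$-c.e.\ trace $T$ of $J^A$ bounded by $h'$. Pulling it back, $\seq{T_{p(e,x)}}\sN x$ is a $B$-c.e.\ trace for $\Phi^A_e$ with $|T_{p(e,x)}|\le k(x)$ for all $x\ge$ some $x_e$. This is uniform: there is a computable function $r$ such that $W_{r(e)}$ is this pullback.

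By the recursion theorem, take $e$ with $W_{e}=W_{r(e)}$ as oracle-c.e.\ sequences. For this fixed point, for all $x$ beyond some finite point, $W^B_{e,x}$ has size at most $k(x)$ and contains $\Phi^A_e(x)$ whenever it is defined. Hence $U^B_x=W^B_{e,x}$ there, and the second bullet of the claim holds. Choose $o$ so that all $x\in I_n\cup J_n$ with $n>o$ lie beyond this point, which is possible because the blocks are consecutive intervals. The number $o$ is not computable from the construction, but the claim only asserts that such an $o$ exists. The construction is simply told to ignore levels $n\le o$; equivalently, we run it with $o$ as a nonuniform parameter, and $o$ is fed into the recursion theorem together with $e$.

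The main obstacle I expect is exactly this circularity. The functional $\Phi$ depends on the trace, the trace exists only because of $A\SJR B$ applied to $\Phi$, and moreover $\Phi$ is defined using $h$, which comes from another recursion-theorem fixed point for $\Psi$ and $\seq{V_y}\sN y$. I would handle this with a single double recursion theorem. The construction is computable uniformly in the indices for $\seq{V_y}\sN y$ and $\seq{U^-_x}\sN x$. The bound $h$ is obtained first, from the jump-traceability of $B$ applied to the uniform family $\Psi$, and for this the universal $J^B$ is traced, so that $h$ does not depend on the indices. The trimming by $k$ then guarantees that the construction behaves correctly for every oracle even at wrong indices.
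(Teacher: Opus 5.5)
Your core mechanism is sound: trim the oracle trace so the size bound holds for every $Y$, pick one slow order function $h'$ that works for all candidate indices beyond a finite point, apply $A\SJR B$ once, and pull back along $p$. The way you resolve the circularity you identified, however, has two genuine gaps.

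\emph{First gap: $h$ is not index-independent.} A $\tilde h$-bounded c.e.\ trace $\tilde V$ of the universal $J^B$ traces $\Psi=\Phi_a$ only via $V_y=\tilde V_{\langle a,y\rangle}$. Its bound is therefore $\tilde h(\langle a,y\rangle)$, where $a$ is the index of $\Psi$ that the recursion theorem has not yet produced. So $I_n$, $J_n$ and your $k$ depend on $a$. Your $h'$, computed from a single fixed $k$, need not satisfy $h'(p(e,x))\le k_a(x)$ for the eventual $a$.

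\emph{Second gap: the choice of $o$ is circular.} The claim asks for $o$ to be available \emph{during} the construction, since modules are run only for $n>o$. You choose $o$ after obtaining the fixed point $e$, and then let the construction ignore levels $\le o$. That changes the construction, hence $r$, hence the fixed point and its threshold $x_e$. Feeding $o$ into the recursion theorem as a parameter yields fixed points $e(o)$, and nothing guarantees some $o$ lying above the threshold of $e(o)$. Your remark that $o$ is ``not computable from the construction'' is exactly what the paper avoids.

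The missing idea is to make $o$ a computable function of the indices and to choose the order function uniformly over \emph{all} index pairs. The paper lets $d$ range over candidate indices for $\Phi$ and $e$ over those for $\Psi$. It uses $V^e_y=\tilde V_{\langle e,y\rangle}$ with bound $h^e$ and the resulting partition $I^e_n,J^e_n$. It then computes one order function $k(d,x)$ with $k(d,x)\le n$ whenever $n>\langle d,e\rangle$ and $x\in I^e_n\cup J^e_n$; this is possible since, for fixed $x$, only finitely many $e$ impose a constraint. A single application of $A\SJR B$ gives a $k$-bounded $\tilde U^B$ tracing $J^A$. Then $U^-_x=\tilde U^-_{\langle d,x\rangle}$ satisfies the claim for \emph{every} pair with $o=\langle d,e\rangle$. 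Only now is the double recursion theorem applied, handing the construction $d$ and $e$, and hence $o$, $I_n$, $J_n$, $V$ and $U$.

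Your version can be repaired along these lines. Your $h'$ already yields the threshold $x\ge e$, which is computable from $e$, so $o$ can be defined from the indices. And $h$ can be made genuinely index-free by a diagonal bound such as $h(y)=\max_{a\le y}\tilde h(\langle a,y\rangle)$, with $V$ trimmed to $h$. The finitely many $y<a$ where $V$ may then fail to trace $\Psi^B$ are absorbed into $o$, which is again computable from $(a,e)$. As written, though, neither point is handled.
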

It would seem that~$\seq{V_y}\sN y$, $h$ and~$\seq{U_x^-}\sN x$ are provided by the recursion theorem in a straightforward way. However, we note that $h$ depends on the index of~$\Psi$, and that in general, the assumption $A\SJR B$ does not allow us to uniformly, given an order function~$k$, obtain a $k$-bounded trace of $J^A$ (using a universal trace we can obtain such a trace but which may omit tracing finitely many values of $\Phi^A$, which will amount to the same thing as we do here). We do the following.

We start with a c.e.\ trace of $J^B$, bounded by some order function $\tilde h$. For each~$e$, let $h^e(y) = \tilde h(e,y)$. Assuming that $J^Y(e,y) = \Phi_e^Y(y)$ for all~$Y$, $e$ and~$y$, this gives us, uniformly in~$e$, the index of a c.e.\ trace~$\seq{V_y^e}\sN y$ of $\Psi_e^B$ bounded by~$h^e$. 

The definition above then gives us, for each~$e$, a partition of $\NN$ to sets $I_n^e$ and $J_n^e$ ($n \in \NN$), using the bound~$h^e$. This allows us to compute an order function~$k$ such that for all $d$, $e$, $n > \seq{d,e}$ and $x\in I^e_n\cup J^e_n$, we have $k(d,x)\le n$. Fix a $k$-bounded, $B$-c.e.\ trace~$\seq{\tilde{U}_x^B}\sN x$ of $J^A$; we may assume that for any oracle~$Y$,~$\seq{\tilde{U}_x^Y}\sN x$ is $k$-bounded. This allows us to compute, uniformly in~$d$ and~$e$, a c.e.\ index of an oracle trace~$\seq{U_x^-}\sN x$ that satisfies the required conditions when $\Psi=\Psi_e$ and $\Phi = \Phi_d$, where $ o = \seq{d,e}$. 

By the recursion theorem, during the construction we know the indices~$d$ and~$e$, and so the number~$o$, the sets $I_n$ and $J_n$, and c.e.\ indices of~$\seq{V_y}\sN y$ and~$\seq{U_x^-}\sN x$, as promised.  This shows \cref{cl:witzi}.

\medskip

We are ready to give the details of the construction of $\Phi$ and~$\Psi$. The definition of~$\Psi$ on the sets $R_x$ is immediate:
\begin{itemize}
	\item[(i)] For all $n>o$, for all $x\in I_n$, for every oracle~$Y$, for all $i<n$, if~$\ell$ is the $i$th number enumerated into $U_x^Y$, and $y$ is the $i$th element of~$R_x$, then we define $\Psi^Y(y) = \ell$. 
\end{itemize}
Note that we may assume that $x\in I_n$ implies $x\ge n$, so setting $|R_x| = x$ is certainly sufficient to make these definitions, recalling that $|U_x^Y|\le n$ when $n>0$ and $x\in I_n$. 

We will have a module for each $n > o$. This module can perform the following actions:
\begin{itemize}
	\item Test a length on~$I_n$; 
	\item Test a string on~$J_n$; 
	\item For any oracle~$Y$, declare that~$Y$ believes that a length~$\ell$ should be promoted;
	\item Promote some lengths to the $(n-1)$-module. 
\end{itemize}

We describe what each of these means, when these actions are performed, and argue that we can indeed perform these actions.

\subsubsection*{Promoting lengths} Let $n>o$. 

\begin{itemize}
	\item[(ii)] The $n$-module promotes every $\ell\in \bigcup_{y\in Q_n} V_y$. 
\end{itemize}

This definition applies even to the smallest module $n = o+1$, however, for this module, no actual action will be taken. 

\subsubsection*{Testing lengths on~$I_n$}

\begin{itemize}
	\item[(iii)] At stage~$0$, the $n$-module tests $f(n,0)$ on~$I_n$. At stage $s>0$, if $f(n,s)\ne f(n,s-1)$, the $n$-module tests $f(n,s)$ on~$I_n$. Finally, if at some stage~$s$, the $(n+1)$-module promotes a length~$\ell$, then the $n$-modules tests~$\ell$ on~$I_n$. 

	\item[(iv)] Testing a length~$\ell$ on~$I_n$ at stage~$s$ means: choose some unused $x\in I_n$, and define $\Psi^\s(x)=\s$ for all binary strings~$\s$ of length~$\ell$. 
\end{itemize}

We need to argue that such a fresh~$x$ can always be chosen; that is, we need to show that the $n$-module will test at most $|I_n|$ many lengths during the entire construction. The number~$g(n)$ bounds the number of lengths $f(n,s)$ ever tested on~$I_n$. By~(ii), at most $\sum \left\{ h(y) \,:\,  y\in Q_{n+1} \right\}$ many lengths are every promoted by the $(n+1)$-module. So the definition of~$|I_n|$ ensures that it is large enough. 

\subsubsection*{Testing strings on~$J_n$}

\begin{itemize}
	\item[(v)] The $n$-module tests on~$J_n$ every string in $\bigcup_{x\in I_n} \bigcup_{y\in R_x} V_y$. 
\end{itemize}

Note that the $n$-module will test at most $r_n$ many strings during the construction. Since $|J_n| = 2^{r_n}$, for the purposes of such testing, we identify $J_n$ with $\{0,1\}^{r_n}$, which we think of as a hypercube of side length 2 and dimension $r_n$. We let $d=0,1,\dots, r_n-1$ denote the axes of this hypercube; for $d<r_n$ we let $J_n(d) = \{ \tau \in J_n : \tau(d) = 0\}$. This is one half of a split of the hypercube into two pieces orthogonal to axis~$d$. 

\begin{itemize}
	\item[(vi)] Testing a string~$\s$ on~$J_n$ means the following.  We choose an unused axis~$d$  and define $\Phi^\sigma(x) = \sigma$ for all unused $x\in J_n(d)$.
\end{itemize}

Now~(v) ensures that indeed, there will always be an unused axis to choose. 

\subsubsection*{Seeking promotion}
Let $n>o$ and let~$Y$ be an oracle. 

\begin{itemize}
	\item[(vii)] We say that~$Y$ \emph{confirms} a string~$\s$ tested on~$J_n$ if $\s\in U_x^Y$ for every $x\in J_n$ on which we defined $\Phi^\s(x)=\s$. 

	\item[(viii)] Let $s$ be a stage; let~$\ell$ be the longest length which~$Y$ believed should be promoted prior to stage~$s$. Suppose that there are two strings, $\s_0$ and~$\s_1$, of the same length, such that:
	\begin{itemize}
		\item $\s_0\rest{\ell} = \s_1\rest{\ell}$; and
		\item $Y$ confirms both~$\s_0$ and~$\s_1$ on~$J_n$ at stage~$s$. 
	\end{itemize}
	Then we say that~$Y$ believes that the length $|\s_0|= |\s_1|$ should be promoted by the $n$-module. 

	\item[(ix)] The action taken then is: if $m$ is the $i$th length that $Y$ believes should be promoted by the $n$-module, and $i<n$, and $y$ is the $i$th element of~$Q_n$, then we define $\Psi^Y(y) = m$. 
\end{itemize}

We remark that it is quite possible that some oracles will believe that more than~$n$ many lengths should be promoted by the $n$-module. However this will not be the case for $Y=B$, as the next claim states. 

We presented the construction in a way that makes it easier to show that it can indeed be performed. It would be good, though, to follow the steps in order: 
\begin{enumerate}
\item A length $\ell = f(n,s)$ is tested on~$I_n$; this defines $\Phi$ on an element of~$I_n$ ((iii) and (iv) above).
\item This prompts strings of length~$\ell$ to appear in $U_x^Y$, and we make a definition of $\Psi$ on an element of $y\in R_x$ (i).
\item This prompts some strings to appear in $V_y$; in turn, these strings are tested on~$J_n$ ((v) and (vi)).
\item This prompts strings to show up in various $U_x^Y$ (for $x\in J_n$), and $Y$ believes that some length~$\ell$ should be promoted; a definition of $\Psi^Y(y)$ for some $y\in Q_n$ is made (with value~$\ell$, not a string of length~$\ell$) ((vii), (viii) and (ix)).
\item This prompts lengths to show up in $V_y$, and the $n$-module promotes them (ii). We are now back to step~(1) for the $(n-1)$-module for this promoted length. 
\end{enumerate}

This completes the description of the construction of $\Phi$ and~$\Psi$. 

\begin{benignclaim}
For each $n > o$, $B$ believes at most $n-1$ lengths should be promoted by the $n$-module.
\end{benignclaim}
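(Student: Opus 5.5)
The plan is to argue by contradiction. Suppose $B$ believes that $n$ distinct lengths $\ell_1,\dots,\ell_n$ should be promoted by the $n$-module, listed in the order in which these beliefs arise. I will produce an $x\in J_n$ for which $U_x^B$ contains $n+1$ distinct strings. This contradicts \cref{cl:witzi}, which gives $|U_x^B|\le n$ for all $x\in J_n$ when $n>o$.

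\emph{Step 1: the witnessing strings and their branching nodes.} For each $i$, (viii) gives two strings $\s^i_0\ne\s^i_1$ of length $\ell_i$, both tested on $J_n$ and both confirmed by $B$, with $\s^i_0\rest{\ell_{i-1}}=\s^i_1\rest{\ell_{i-1}}$ (take $\ell_0=0$). Since the two strings are distinct but agree up to $\ell_{i-1}$, we get $\ell_i>\ell_{i-1}$, so the lengths strictly increase. Let $v_i=\s^i_0\wedge\s^i_1$ be their longest common initial segment. Then $\ell_{i-1}\le|v_i|<\ell_i$. These length intervals are disjoint, so the nodes $v_1,\dots,v_n$ are pairwise distinct.

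\emph{Step 2: many incomparable confirmed strings.} Let $T$ be the finite tree of all initial segments of the $2n$ strings $\s^i_j$. Each $v_i$ has at least two children in $T$, namely the successors of $v_i$ along $\s^i_0$ and along $\s^i_1$. By \cref{lem:binary_tree_size}, $T$ has at least $n+1$ leaves. Every leaf of $T$ is one of the $\s^i_j$, and distinct leaves are pairwise incomparable. So we obtain $n+1$ pairwise incomparable strings $\tau_0,\dots,\tau_n$, each tested on $J_n$ and each confirmed by $B$.

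\emph{Step 3: the hypercube point.} By (vi), each $\tau_k$ was tested using its own axis $d_k$, and distinct tested strings use distinct axes. Identify $J_n$ with $\{0,1\}^{r_n}$, and let $x$ be the vertex with coordinate $0$ on the axes $d_0,\dots,d_n$ and coordinate $1$ on every other axis. Then $x\in J_n(d_k)$ for every $k$. Moreover, $x\notin J_n(d)$ for any axis $d$ used by a string other than the $\tau_k$. Hence no other tested string ever claimed $x$, and $x$ is still unused whenever some $\tau_k$ is tested.

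Here I read ``unused'' as ``not yet used by a string comparable with $\s$''. That reading is exactly what consistency of the functional $\Phi$ requires, and incomparable strings $\tau_k$ never conflict. With it, (vi) defines $\Phi^{\tau_k}(x)=\tau_k$ for every $k$. Confirmation, (vii), then puts every $\tau_k$ into $U_x^B$, so $|U_x^B|\ge n+1$, which is the desired contradiction.

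The main obstacle I expect is Step 3: making sure the chosen vertex really received the definition $\Phi^{\tau}(x)=\tau$ for all $n+1$ strings simultaneously. This is why the $J_n$ are hypercubes with one axis per tested string. Choosing coordinate $1$ off the relevant axes excludes interference from all other tests. The only delicate point is interpreting ``unused'' so that the definitions along incomparable strings do not block each other.
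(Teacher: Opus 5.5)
Your proof is correct and follows the paper's own argument. You argue by contradiction, use \cref{lem:binary_tree_size} to extract $n+1$ pairwise incomparable confirmed strings, and take the hypercube vertex with coordinate $0$ exactly on their axes, reading ``unused'' the same way the paper does. The only difference is cosmetic: you take the leaves of the full tree of initial segments, branching at the meets $\sigma^i_0\wedge\sigma^i_1$, whereas the paper builds the antichain $Z_1$ recursively and uses a tree with levels $\ell_0<\dots<\ell_n$ branching at $\sigma^{i+1}_0\rest{\ell_i}$; your version is slightly more direct.
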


\begin{proof}
Suppose not, and fix lengths $0 = \ell_0 < \ell_1 <  \dots < \ell_n$ such that for $i > 0$, $B$ believes $\ell_i$ is to be promoted by the $n$ module.  For each $i > 0$, fix strings $\sigma_0^i$ and $\sigma_1^i$ on the basis of which $B$ believes $\ell_i$ should be promoted by the $n$ module.

By construction, $B$ believes $\ell_i$ should be promoted before it believes $\ell_{i+1}$ should be, and thus $\sigma_0^{i+1}\uhr{\ell_i} = \sigma_1^{i+1}\uhr{\ell_i}$ for $i > 0$.  Clearly this also holds for $i = 0$.  Now define the following sequence of sets (note that our subscripts are decreasing):
\begin{itemize}
\item $Z_n = \{ \sigma_0^{n}, \sigma_1^{n}\}$;
\item For $0 < i < n$, 
\[
Z_i = Z_{i+1} \cup ( \{\sigma_0^i, \sigma_1^i\} \setminus \{ \sigma\uhr{\ell_i} : \sigma \in Z_{i+1}\}).
\]
\end{itemize}
Note that each $Z_i$ is an antichain, and $\{\sigma_0^i, \sigma_1^i\} \subseteq \{ \sigma \uhr{\ell_i} : \sigma \in Z_i\}$ by construction.  Let $T = \{ \sigma\uhr{\ell_i} : \sigma \in Z_1 \ \& \ i \le n\}$, which we think of as a tree.  The leaves of $T$ are precisely the elements of~$Z_1$.

For $i < n$, let $v_i = \sigma_0^{i+1}\uhr{\ell_i} = \sigma_1^{i+1}\uhr{\ell_i}$.  Then the $v_i$ are pairwise distinct because they all have different lengths, and each has at least 2 children in $T$ (namely, $\sigma_0^{i+1}$ and $\sigma_1^{i+1}$). By Lemma~\ref{lem:binary_tree_size}, $|Z_1| \ge n+1$.

Let $D$ be the set of axes chosen for various $\sigma \in Z_1$, and define $\tau \in J_n$ by
\[
\tau(d) = \left\{\begin{array}{cl}
0 & d \in D,\\
1 & d \not \in D.
\end{array}\right.
\]
Observe that $\tau \in J_n(d) \iff d \in D$.

Recall that our construction will define $\Phi^\sigma(\tau) = \sigma$ for all $\sigma \in Z_1$ unless $\Phi^\sigma(\tau)$ was already defined to be something else.  But no $\sigma' \not \in Z_1$ could have caused such a previous definition, as such $\sigma'$ will have an axis $d \not \in D$, and so will not seek to make a definition at $\tau$.  And $\sigma' \in Z_1 \setminus \{\sigma\}$ will not do this, as they will only seek to make a definition for $\Phi^{\sigma'}(\tau)$, and $\sigma'$ and $\sigma$ are incomparable as $Z_1$ is an antichain.

Thus $\Phi^\sigma(\tau) = \sigma$ for all $\sigma \in Z_1$.  And by the definition of promotion and confirmation, we have $Z_1 \subseteq U_\tau^B$, contradicting $|U_\tau^B| \le  n$.
\end{proof}

It follows that for every length $\ell$ which $B$ believes should be promoted by the $n$-module, there is a corresponding $y\in Q_n$ such that $\Psi^B(y)= \ell$. Since~$\seq{V_y}\sN y$ traces $\Psi^B$, $\ell\in V_y$. Thus, every length which $B$ believes should be promoted is eventually promoted.

\smallskip

We now proceed to the second part of the argument, in which we fix $Y=B$; the following construction is computable relative to~$B$. 

Let $\hat{\ell}$ be the longest length which $B$ believes the $o+1$ module should promote.  Nonuniformly, fix $A\uhr{\hat{\ell}}$.  Let $L(n,s)$ be the set of lengths being tested by the $n$ module at stage $s$ (tested on~$I_n$).  At a stage $s$, define a partial sequence $\sigma_n^s$ for $o \le n \le s$ recursively:
\begin{itemize}
\item $\sigma_o^s = A\uhr{\hat{\ell}}$;
\item Given $\sigma_n^s$, if there is some length which $B$ believes should be promoted by the $n+1$ module at stage $s$, but which is not yet promoted, then leave $\sigma_{n+1}^s$ undefined.  Otherwise, define $\sigma_{n+1}^s$ to be a string $\rho$ extending $\sigma_n^s$ with $|\rho| = \max L(n+1,s)$, and such that for each $\ell \in L(n+1, s)$, $\rho\uhr{\ell}$ is confirmed by $B$ at $n+1$ by stage $s$, if such a string $\rho$ exists.
\end{itemize}

\begin{benignclaim}
There is at most one possible choice for $\sigma_n^s$.
\end{benignclaim}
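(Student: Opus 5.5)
We argue by induction on $n$, from $n=o$ upward, that $\sigma_n^s$ has at most one possible value. For $n=o$ there is nothing to prove, since $\sigma_o^s=A\uhr{\hat\ell}$ is fixed. For the inductive step, assume $\sigma_n^s$ is defined and uniquely determined. Suppose, towards a contradiction, that there are two distinct strings $\rho_0\ne\rho_1$ that both qualify as $\sigma_{n+1}^s$. Then both extend $\sigma_n^s$ and both have length $\max L(n+1,s)$. Moreover, for every $\ell\in L(n+1,s)$, both $\rho_0\uhr{\ell}$ and $\rho_1\uhr{\ell}$ are confirmed by $B$ on $J_{n+1}$ by stage~$s$.

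The first step is to pick the least $\ell^*\in L(n+1,s)$ with $\rho_0\uhr{\ell^*}\ne\rho_1\uhr{\ell^*}$. It exists because $\rho_0\ne\rho_1$ and their common length $\max L(n+1,s)$ lies in $L(n+1,s)$. The strings $\rho_0\uhr{\ell^*}$ and $\rho_1\uhr{\ell^*}$ then have the same length and are both confirmed by $B$ at stage~$s$. The aim is to show that they satisfy the remaining hypothesis of (viii), so that $B$ believes $\ell^*$ should be promoted by the $(n+1)$-module.

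The key point is to compare the longest length $\ell'$ that $B$ believed should be promoted by the $(n+1)$-module prior to stage $s$ with the length $|\sigma_n^s|$; we claim $\ell'\le|\sigma_n^s|$. Since $\sigma_{n+1}^s$ is being defined at all, every length that $B$ believes should be promoted by the $(n+1)$-module at stage~$s$ has already been promoted. By (iii), every promoted length is tested by the $n$-module on $I_n$, so $\ell'\in L(n,s)$. If $n>o$, this gives $\ell'\le\max L(n,s)=|\sigma_n^s|$. If $n=o$, we use instead that $\hat\ell$ is the longest length $B$ ever believes the $(o+1)$-module should promote, so $\ell'\le\hat\ell=|\sigma_o^s|$.

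Since $\rho_0$ and $\rho_1$ both extend $\sigma_n^s$, they agree up to $\ell'$, and so $\rho_0\uhr{\ell^*}\rest{\ell'}=\rho_1\uhr{\ell^*}\rest{\ell'}$. Hence $B$ believes at stage~$s$ that $\ell^*$ should be promoted by the $(n+1)$-module. Because $\sigma_{n+1}^s$ is defined, $\ell^*$ must already have been promoted. By (iii) this gives $\ell^*\in L(n,s)$ (respectively $\ell^*\le\hat\ell$ when $n=o$), so $\ell^*\le|\sigma_n^s|$. But then $\rho_0\uhr{\ell^*}=\sigma_n^s\uhr{\ell^*}=\rho_1\uhr{\ell^*}$, contradicting the choice of~$\ell^*$.

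The main obstacle I expect is the stage bookkeeping: confirmations, beliefs and promotions are all approximated in stages. One must check that ``confirmed by stage~$s$'' in the definition of $\sigma_{n+1}^s$ matches the ``at stage $s$'' of (viii). One must also check that a length promoted by stage $s$ already belongs to $L(n,s)$, which may require the convention that tests triggered by a promotion at stage $s$ are counted in $L(n,s)$. The case where $\ell^*$ lands in some $V_y$ with $y\in Q_{n+1}$ before $B$ believes in it causes no trouble, since the argument only uses that $\ell^*$ has been promoted. I would fix these conventions explicitly before running the induction.
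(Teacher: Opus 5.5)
Your argument is correct and matches the paper's proof. You take the least $\ell^*\in L(n+1,s)$ at which the two candidates differ and observe that this makes $B$ believe $\ell^*$ should be promoted by the $(n+1)$-module. Since $\sigma_{n+1}^s$ is defined, $\ell^*$ must already be promoted, which forces $\ell^*\le|\sigma_n^s|$ (or $\ell^*\le\hat\ell$ when $n=o$), a contradiction. Your explicit check that the two restrictions agree up to the previously believed length $\ell'$, as clause (viii) requires, is a detail the paper leaves implicit.
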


\begin{proof}
For $n = o$, this is immediate.

For $n > o$, suppose there were two distinct strings $\rho_0$ and $\rho_1$ which are appropriate to pick for $\sigma_n^s$.  Fix $\ell \in L(n, s)$ least with $\rho_0\uhr{\ell} \neq \rho_1\uhr{\ell}$.  Then $\rho_0\uhr{\ell}, \rho_1\uhr{\ell}$ witness that $B$ believes $\ell$ should be promoted at stage $s$, and $\ell > |\sigma_{n-1}^s|$, as $\rho_0$ and $\rho_1$ both extend $\sigma_{n-1}^s$.  This contradicts $|\sigma_{n-1}^s| = \max L(n-1, s)$ (or contradicts the definition of $\hat{\ell}$ if $n = o+1$).
\end{proof}

\begin{benignclaim}
Let $\ell_n = \max \bigcup_s L(n,s)$ for $n > o+1$, and $\ell_{o+1} = \hat{\ell}$.  Then $A\uhr{\ell_n} = \lim_s \sigma_n^s$ for $n > o$.
\end{benignclaim}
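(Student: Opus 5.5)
The plan is induction on $n>o$, using two facts. First, each $L(n,s)$ is nondecreasing in $s$ and eventually constant. Second, $A\uhr{\ell}$ is always an admissible candidate once enough of the construction has appeared. Uniqueness of the candidate is already supplied by the previous claim, so what remains is to show that the true initial segment of~$A$ eventually becomes, and stays, the unique value of $\sigma_n^s$.

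\emph{Stabilisation.} Every length tested on $I_n$ uses a fresh element of the finite set $I_n$, so only finitely many lengths are ever tested. Hence $\ell_n$ is well defined and $L(n,s)=\bigcup_t L(n,t)$ for all large~$s$. By the claim bounding promotions, $B$ believes only finitely many (at most $n-1$) lengths should be promoted by the $(n+1)$-module. Each of these is eventually promoted, by the remark following that claim ($\Psi^B(y)=\ell$ for some $y\in Q_{n+1}$, so $\ell\in V_y$). So for all large~$s$ the ``leave undefined'' clause no longer applies at level $n+1$. The same holds at each of the finitely many levels $\le n$ considered.

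\emph{Admissibility of $A\uhr{\ell_n}$.} Fix $\ell\in L(n,s)$, tested on some $x\in I_n$, and put $\sigma=A\uhr{\ell}$.
\begin{itemize}
\item Since $\Phi^\sigma(x)=\sigma$, we get $\Phi^A(x)=\sigma\in U_x^B$ by \cref{cl:witzi}.
\item By~(i), $\Psi^B(y)=\sigma$ for some $y\in R_x$, hence $\sigma\in V_y$. By~(v), $\sigma$ is tested on $J_n$ along some axis.
\item For every $x'\in J_n$ at which we defined $\Phi^\sigma(x')=\sigma$, we have $\Phi^A(x')=\sigma$, since $\sigma\prec A$ and $\Phi$ is consistent. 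So $\sigma\in U_{x'}^B$.
\end{itemize}
Thus $B$ eventually confirms $A\uhr{\ell}$ on $J_n$. There are only finitely many such $\ell$, so from some stage on the string $\rho=A\uhr{\ell_n}$ satisfies the confirmation requirement for every $\ell\in L(n,s)$.

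\emph{Induction.} By the inductive hypothesis (with base $\sigma_o^s=A\uhr{\hat\ell}$ fixed nonuniformly), $\sigma_{n-1}^s=A\uhr{\ell_{n-1}}$ for all large~$s$. Then $\rho=A\uhr{\ell_n}$ extends $\sigma_{n-1}^s$ and meets every requirement, and by the uniqueness claim it is the only choice. So $\sigma_n^s=A\uhr{\ell_n}$ for all large~$s$, which gives the limit.

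The step I expect to be the main obstacle is the compatibility of lengths. The extension requirement $\rho\succeq\sigma_{n-1}^s$ needs $\ell_{n-1}\le\ell_n$, or else a convention under which $\rho$ extends $\sigma_{n-1}^s$ in the sense of compatibility. I would first try to derive this from $f(n,s)$ being nondecreasing in~$n$, together with the fact that lengths promoted into level $n-1$ are lengths of strings confirmed at level~$n$. If that fails, I would read ``extending'' as ``compatible with'' and check that the uniqueness claim and the argument above go through unchanged, since all candidates are initial segments of~$A$.
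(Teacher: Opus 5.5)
Your outline follows the paper's proof. It uses induction on $n$ and stabilisation of $L(n,\cdot)$ and of the lower levels. It then uses eventual confirmation of each $A\uhr{\ell}$ with $\ell\in L(n,s)$, because $\Phi^A$ is traced by $(U^B_x)$; your unpacking through (i), (v) and (vi) correctly expands the paper's one sentence. Finally it takes uniqueness from the preceding claim.

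\textbf{The gap} is the step you flag and leave open, $\ell_{n-1}\le\ell_n$. It is not a technicality. Once $\sigma^s_{n-1}=A\uhr{\ell_{n-1}}$, if $\ell_{n-1}>\ell_n$ then no string of length $\max L(n,s)$ extends $\sigma^s_{n-1}$, so $\sigma^s_n$ stays undefined.

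The paper proves the inequality first. By (iii), a length tested on $I_{n-1}$ is one of two kinds:
\begin{itemize}
\item some $f(n-1,s)\le f(n,s)\in L(n,s)$, by monotonicity of $f$ in $n$; or
\item a length promoted by the $n$-module, which is one of the lengths tested by the $n$-module.
\end{itemize}
Your ``lengths of strings confirmed at level $n$'' does not yet give the second case: you need those strings to have lengths in $\bigcup_s L(n,s)$. This holds once the construction ignores elements of $U^Y_x$ that are not possible values of $\Phi(x)$, and elements of $V_y$ that are not possible values of $\Psi(y)$. That convention is harmless, since the filtered sets are still traces of $\Phi^A$ and $\Psi^B$ with the same bounds. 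Then:
\begin{itemize}
\item for $x\in I_n$, every string in $U^Y_x$ has the length tested on $x$;
\item by (i) and (v), so does every string tested on $J_n$;
\item by (viii)--(ix), every length promoted by the $n$-module is the common length of two such strings.
\end{itemize}
This is what the paper's parenthetical ``and so eventually an element of $L(n,s)$'' rests on.

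Your fallback of reading ``extending'' as ``compatible with'' does not prove the claim as stated, because it changes the definition of $\sigma^s_n$. That change is not free: the rest of the proof needs the chain $\sigma^{s_t}_t\succeq\sigma^{s_t}_{t-1}\succeq\cdots\succeq\sigma^{s_t}_n$, and compatibility is not transitive. The chain is used to show that $A_t=\sigma^{s_t}_t$ converges to $A$. It is also used in the cost bound, where $\sigma^{s_t}_m$ and $\sigma^{s_{t+1}}_m$ must both extend $\sigma^{s_t}_{m-1}$.

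Your justification for the fallback is also wrong. At a given stage the candidates for $\sigma^s_n$ need not be initial segments of $A$, since $B$ can confirm wrong strings (which is why promotion is needed). Only the eventual value is an initial segment of $A$.
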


\begin{proof}
Induction on $n$.  The case $n = o+1$ is immediate.

For $n > o$, first observe that $\ell_{n-1}$ is either a length promoted by the $n$ module (and so eventually an element of $L(n,s)$) or is $f(n-1, s) \le f(n,s)$ for some $s$, and so is bounded by an element of $L(n,s)$.  Thus $\ell_{n-1} \le \ell_n$.

Now fix $s_0$ sufficiently large such that $\sigma_m^s = A\uhr{\ell_m}$ for all $m < n$ and $s \ge s_0$, and such that $L(n,s_0) = \bigcup_s L(n,s)$.  As $\Phi^A$ is traced by~$\seq{U_x^B}\sN x$, there is a stage $s_1 \ge s_0$ such that each $A\uhr{\ell}$ for $\ell \in L(n, s_0)$ is confirmed at $n$.  Then $A\uhr{\ell_n}$ is a possible choice for $\sigma_n^s$ for every $s \ge s_1$, and thus is $\sigma_n^s$.
\end{proof}

Define a sequence of stages $(s_t)_{t \in \omega}$ as follows:
\begin{itemize}
\item $s_0 = o$.
\item Given $s_t$, $s_{t+1}$ is the least $s>s_t$ such that for every $n$ with $o < n \le t$, $\sigma_n^s$ exists. 
\end{itemize}
Define $A_t = \sigma_t^{s_t}$.

\begin{benignclaim}
The total $\cost$-cost of the approximation $\seq{A_t}$ is finite. 
\end{benignclaim}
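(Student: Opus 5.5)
The plan is to charge each change of the approximation $\seq{A_t}$ to a specific module $n$ and a specific ``event'' at that module, and then to bound the number of such events at each $n$ computably. Since each charged change costs less than $2^{-(n-1)}$ or so, summing over $n$ will give a finite total. First observe that each $A_t=\sigma_t^{s_t}$ extends $\sigma_n^{s_t}$ for all $o<n\le t$. Hence if $x_t$ is the least point where $A_t$ and $A_{t+1}$ differ, there is a least $n\le t$ with $\sigma_n^{s_t}\ne\sigma_n^{s_{t+1}}$. At that $n$ we have $\sigma_{n-1}^{s_t}=\sigma_{n-1}^{s_{t+1}}$, so
\[
x_t\ \ge\ |\sigma_{n-1}^{s_t}| = \max L(n-1,s_t)\ \ge\ f(n-1,s_t).
\]
For $n=o+1$ the first case would be $\hat\ell$, which is fixed, so only $n>o+1$ can be charged. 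By monotonicity of $\cost$ in the first variable, the cost of stage $t$ is at most $\cost(f(n-1,s_t),s_{t+1})$. This is small provided $f(n-1,s_t)=f(n-1)$, but we must also handle stages where $f(n-1,\cdot)$ has not yet settled.

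Next I would bound, for each $n$, the number of $t$ charged to $n$. A change in $\sigma_n^{s}$ between $s_t$ and $s_{t+1}$, with $\sigma_{n-1}$ fixed, can only happen for one of the following reasons:
\begin{itemize}
\item $L(n,s)$ grew, which happens at most $|I_n|$ times;
\item a new length was believed promotable by $B$ at the $n$-module, which happens at most $n-1$ times by the first Claim;
\item new confirmations appeared.
\end{itemize}
Given $L(n,\cdot)$ and the promotion beliefs, the unique-choice Claim shows that once $\sigma_n$ is defined it cannot change to a different value without one of the first two events. Confirmations only add strings and are monotone, and uniqueness means a defined $\sigma_n$ persists. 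So the number of changes charged to $n$ is at most $N(n)=|I_n|+n+1$, which is computable; in particular it is at most $g(n)+\sum_{y\in Q_{n+1}}h(y)+n+1$.

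Finally I would sum. Charged stages where $f(n-1,s_t)$ is already final cost at most $2^{-(n-1)}$ each, contributing at most $N(n)2^{-(n-1)}$ in total. That is not summable as stated, so the key adjustment is to reindex $f$. Because $\cost$ is benign, we may choose the $\omega$-c.a.\ function $f$ with $\cost(f(n))<2^{-n}/N'(n+1)$ for any computable $N'$ fixed in advance. This is legitimate because $N$ depends on $h$, and $h$ is obtained via the recursion theorem from a fixed trace $\tilde h$ of $J^B$, so we can bound $h^e$ by $\tilde h$ uniformly in advance. Charged stages where $f(n-1,s_t)$ has not yet settled are handled by counting: there are at most $g(n-1)$ values of $f(n-1,\cdot)$, and between changes there are at most $N(n)$ charges. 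Each such charge costs at most $\cost(f(n-1,s),\cdot)$, and by choosing $f(n,s)$ from the benign bound we ensure that each interim value already satisfies the $2^{-n}$ bound up to a factor we absorb.

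The main obstacle is this last bookkeeping: making the choice of $f$ (done before the recursion theorem) compatible with the bounds $N(n)$, which depend on $h$. I would try to resolve it by defining $f$ relative to the a priori bound $\tilde h$ and reserving sizes in $I_n$ using that bound.
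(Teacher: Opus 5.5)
There is a genuine gap in your counting step, and the repair you propose cannot work.

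\textbf{Why the count fails.} Your bound $N(n)=|I_n|+n+1$ on the stages charged to level $n$ is dominated by the event ``$L(n,\cdot)$ grew''. That event may occur about $|I_n|\ge g(n)$ times, so $\sum_n N(n)2^{-(n-1)}$ need not converge.

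Shrinking the targets to $\cost(f(n-1))<2^{-(n-1)}/N'(n)$ is circular, and the circularity runs through $g$, not only $h$. The number $|I_n|$ contains $g(n)$, the number of values of $f(n,\cdot)$. That number depends on how small you demand $\cost(f(n))$ to be, which under your recipe is $2^{-n}/N'(n+1)$. For $\cost_\Omega$, whose benign bound is $1/\epsilon$, this makes $g(n)$ of order $2^nN'(n+1)$. You would then need $N'(n)\ge N(n)\ge g(n)$, i.e.\ roughly $N'(n)\ge 2^nN'(n+1)$ for all $n$, which no sequence of positive integers satisfies. Bounding $h^e$ by $\tilde h$ in advance does not touch this.

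Your separate worry about unsettled values of $f(n-1,\cdot)$ is unnecessary. Build $f(n,s)$ from the benign bound so that $\cost(f(n,s),s)<2^{-n}$ at every stage. This is what the paper uses when it infers $z<f(n,s_t)$ from $\cost(z,s_t)\ge 2^{-n}$.

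\textbf{The missing idea: growth of $L(n,\cdot)$ costs nothing.} What controls $x_t$ is not whether $\sigma_n^{s_t}\ne\sigma_n^{s_{t+1}}$ but whether $\sigma_n^{s_t}\not\preceq\sigma_n^{s_{t+1}}$. If $\sigma_n$ is merely extended, then $x_t\ge|\sigma_n^{s_t}|\ge f(n,s_t)$, and the stage is already cheap at level $n$.

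So suppose $\cost(x_t,t)\ge 2^{-n}$ with $o<n\le t$. Then $x_t<f(n,s_t)\le|\sigma_n^{s_t}|$, so there is a least $m\le n$ with $\sigma_m^{s_t}\not\preceq\sigma_m^{s_{t+1}}$. Let $\ell\in L(m,s_t)$ be least with $\sigma_m^{s_t}\uhr{\ell}\ne\sigma_m^{s_{t+1}}\uhr{\ell}$. These two strings have the same length and are both confirmed by $B$ at module $m$ by stage $s_{t+1}$. Both extend $\sigma_{m-1}^{s_t}$, so they agree up to $|\sigma_{m-1}^{s_t}|<\ell$.

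By (viii), $B$ forms a belief that some length should be promoted by the $m$-module at a stage in $(s_t,s_{t+1}]$. This belief is new, because every length believed at $s_t$ had already been promoted and so is at most $|\sigma_{m-1}^{s_t}|$.

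By the first Claim, module $m$ gets at most $m-1$ such beliefs. Hence among $t\ge n$ at most $\sum_{o<m\le n}(m-1)<n^2$ stages have cost at least $2^{-n}$. This bound is independent of $g$, $h$ and $|I_n|$, and the total cost is of order $\sum_n n^2 2^{-n}<\infty$.

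This is the paper's argument. With $\ne$ replaced by $\not\preceq$, your charging scheme essentially works as it stands, and no re-choice of $f$ is needed.
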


\begin{proof}
Suppose that $o < n \le t$ and $A_t(z) \neq A_t(z+1)$ for some $z$ with $c(z, t) \ge 2^{-n}$.  
As $c(z, s_t) \ge c(z, t)$, $z < f(n, s_t) \in L(n, s_t)$.  Thus $\sigma_n^{s_t} \neq \sigma_n^{s_{t+1}}$.  Fix $m$ least with $\sigma_m^{s_t} \neq \sigma_m^{s_{t+1}}$.  Fix $\ell \in L(m, s_t)$ least with $\sigma_m^{s_t}\uhr{\ell} \neq \sigma_m^{s_{t+1}}\uhr{\ell}$.   If no length less than $\ell$ and greater than $\max L(m-1, s_t)$ is promoted by the $m$ module at a stage $s \in (s_t, s_{t+1}]$, then these witness the promotion of $\ell$ at stage $s_{t+1}$, and $\ell > \max L(m-1, s_t)$, as both $\sigma_m^{s_t}$ and $\sigma_m^{s_{t+1}}$ extend $\sigma_{m-1}^{s_t} = \sigma_{m-1}^{s_{t+1}}$.  So whenever there is such a $z, n$ and $t$, there is a promotion by an $m$ module for $m \le n$ at a stage after $s_t$.

There can be at most $\sum_{o < m \le n} m-1 < n^2$ such promotions over the entire construction.  Thus we can bound the total $\cost$-cost of $\seq{A_t}$ by
\[
 \sum_n n^2 \cdot 2^{-n} < \infty.\qedhere
\]
\end{proof}
This completes the proof of \cref{thm:cost_function_characterisation}.


\section{Computing with random sets}
\label{sec:randoms_computing}

In this section we give a proof of \cref{thm:random_computing_characterisation}. First we discuss the notions of Demuth randomness and weak Demuth randomness; the latter appears in the statement of the theorem, while  the former will play a role in its proof.  We have the implications 
\bc Demuth random $\RA $ weakly Demuth random $\RA$ ML-random.  \ec
 For further background,    see \cite[Section 3.6]{Nies:book} on Demuth randomness, and  in 
particular why this property is  still compatible with being $\DII$. See~\cite{Bienvenu.Downey.ea:14}   for weak Demuth randomness. 

The general idea is that a Demuth test is a sequence $(G_m)$ of effectively open ($\Sigma^0_1$) sets with $\leb G_m \le 2^{-n}$, but unlike a ML-test, the sets $G_m$ are not \emph{uniformly}~c.e. Rather, we think of the sets being given to us in stages, where from time to time, we are allowed to empty a component of the test and restart its enumeration. Each component is restarted only finitely many times, and in fact we require that there is a computable function bounding the number of times each component is restarted. For the formal definition, recall again that a function $f\colon \NN\to\NN$ is $\omega$-c.a.\ if it has a computable approximation $f(m,s)$ such that the number of mind-changes $\# \{s\,:\, f(m,s)\ne f(m,s+1)\}$ is bounded by~$g(m)$, where~$g$ is some computable function.

\begin{definition} \label{def:Demuth}  Let $(W_e)$ be an admissible listing of the effectively open subsets of Cantor space. A \emph{Demuth test} is a sequence  $\seq {G_m}\sN m$ satisfying:
\begin{itemize}
	\item For all~$m$, $\leb G_m \le 2^{-m}$; and
	\item There is an $\omega$-c.a.\ function~$p$ such that $G_m = W_{p(m)}$ for all~$m$. 
\end{itemize}
A set $Y\in 2^{\NN}$ is \emph{captured} by a Demuth test $(G_m)$ if $Y\in G_m$ for infinitely many~$m$. Otherwise, we say that~$Y$ \emph{passes} the test.  A set is \emph{Demuth random} if it passes all Demuth tests.   
\end{definition}

If $(G_m)$ is a Demuth test, witnessed by an $\omega$-c.a.\ function~$p$, and $p(m,s)$ is a computable approximation of~$p$, then we write $G_m[s]$ for the clopen set $W_{p(m,s),s}$: at stage~$s$, we guess that $G_m = W_{p(m,s)}$, and we let $G_m[s]$ be the result of enumerating that effectively open set for~$s$ many steps. We may assume that for all~$m$ and~$s$, $\leb G_m[s]\le 2^{-m}$: we can stop enumerating clopen subsets into the component if we see that its measure will exceed $2^{-m}$. Note that in that case, we know that $p(m,s)\ne p(m)$, so we can wait for a new version of~$G_m$ to be started. 

Thus, an ``index-free'' equivalent definition of a Demuth test $(G_m)$ is: there is a computable array $(G_{m,s})$ of clopen sets such that for all~$m$ and~$s$, $\leb G_{m,s}\le 2^{-m}$, and there is a computable function~$g$ such that for all~$m$, $\# \{s\,:\, G_{m,s}\nsubseteq G_{m,s+1}\}$ is bounded by $g(m)$; and $G_m = \bigcup_{s>t(m)} G_{m,s}$, where $t= t(m)$ is any stage sufficiently late so that for all $s\ge t$, $G_{m,s}\subseteq G_{m,s+1}$. 

\smallskip

Note that the capturing condition is the Solovay one, rather than requiring $Y\in \bigcap_m G_m$. This is because in \cref{def:Demuth}, we do not require that the test be \emph{nested}, meaning $G_{m+1}\subseteq G_m$ for all~$m$. This is not an issue when considering ML-randomness; every non-nested ML test $(U_m)$ can be transformed into a nested one by replacing $U_m$ by $\bigcup_{n>m} U_n$. Applying this transformation to a Demuth test will usually not result in a Demuth test. 

\begin{definition} \label{ref:weak_Demuth}
	A set~$Y$ is \emph{weakly Demuth random} if it passes every nested Demuth test.   
\end{definition}

An equivalent definition is: for every Demuth test $(G_m)$ (nested or not), $Y\notin \bigcap_m G_m$. That is, rather than restricting to nested tests, we can replace the passing condition. To see this, given a Demuth test~$(G_m)$, we let $H_m = \bigcap_{n\le m} G_n$; then $(H_m)$ is a nested Demuth test that captures all $Y\in \bigcap_m G_m$. 

\medskip

\subsection{A proof of (a)$\RA$(b) of \cref{thm:random_computing_characterisation}}

In this direction, we only need the assumption that~$B$ is $K$-trivial. For each one of the five classes $\+C$ listed in \cref{thm:random_computing_characterisation}, let (b)$_{\+C}$ indicate item~(b) of the theorem as applied to this class: the statement that for all ML-random $Y\in \+C$, $A\le_\Tur Y\oplus B$. Of course, if $\+C\subseteq \+D$ then (b)$_{\+D}$ implies (b)$_{\+C}$. We note that among the five classes, the first,   the class of sets that are not weakly Demuth random,  is the largest. Namely, a weakly Demuth random set cannot be superhigh (this is a result of \Kuc\ and Nies \cite[Cor.\ 3.6]{Kucera.Nies:11}) and cannot be $\omega$-c.a.\ (this is essentially by definition; if $Y$ is $\omega$-c.a.\ then $(\{Y\uhr{m}\})$ is a nested Demuth test capturing~$Y$).  We note that every superlow set is $\omega$-c.a., and that every set~$\Omega_R$ for an infinite computable~$R$ is $\omega$-c.a.\ as well. 

Thus, in this direction, it suffices to show that (a) implies (b)$_{\+C}$ where $\+C$ is the class of sets that are not weakly Demuth random. 

\medskip

Hirschfeldt and Miller showed that for each null $\PI 2$ class $\mathcal H\sub \cantor$,  there is a cost function $\cost$ such that for each $A, Y \in \cantor$, if $A \models \cost$ and $Y \in \MLR \cap \mathcal H$ then  $A \le_T Y$; see \cite[proof of 5.3.15]{Nies:book} for a proof of  this otherwise unpublished result. For the direction under discussion, we need to show that when $\+H$ is the class of sets captured by some nested Demuth test, then the associated cost function~$\cost$ is benign, and that the Hirschfeldt-Miller result partially relativises as necessary. 
(See also the post on weak Demuth randomness by \Kuc\  and Nies in \cite{LogicBlog:11}.)

\smallskip

Let $(G_m)$ be a nested Demuth test, with approximation $(G_{m,s})$ as discussed above. Note that we may assume that for each~$m$ and~$s$, $G_{m+1,s}\subseteq G_{m,s}$. For $k<t$ we let:
\begin{itemize}
	\item $r(k,t)$ be the smallest~$m$ such that for some~$s$ with $k<s\le t$, $G_{m,s-1}\nsubseteq G_{m,s}$ (the smallest~$m$ such that the $m$th component of the test was restarted at some stage between~$k$ and~$t$). 

	\item $V_{k,t}=  \bigcup_{k < s \le t} G_{r(k,s),s}$.

	\item $\cost(k,t) = \leb V_{k,t}$. 
\end{itemize}

By definition, $V_{k,t}\subseteq V_{k,t+1}$, so~$\cost$ is increasing in the second variable. Also, $V_{k,t}\supseteq V_{k+1,t}$: we have $r(k,s)\le r(k+1,s)$, and so $G_{r(k,s),s}\supseteq G_{r(k+1,s),s}$. Hence, $\cost$ is decreasing in the first variable, so~$\cost$ is monotonic. 

Note that if no component $G_{m'}$ for $m'\le m$ is restarted after stage~$k$, then $r(k,s)>m$ for all $s>k$, implying that $G_{r(k,s),s}\subseteq G_{m,s}$ for all $s>k$, and so $V_{k,t}\subseteq G_m$ for all $t>k$.  

Suppose that $\cost(k,t)>2^{-m}$. Since $\leb G_m\le 2^{-m}$, this implies that $V_{k,t}\nsubseteq G_m$, so some component~$m'$ is restarted between stages~$k$ and~$t$. Since there is a computable bound on the number of times each component is restarted, we see that the cost function $\cost$ is benign.

We also observe that for all~$k$, $\bigcap_m G_m \subseteq \bigcup_{t>k} V_{k,t}$. This is because $\lim_t r(k,t) = m$ where~$m$ is smallest such that the $m$th component is restarted after stage~$k$, and so $G_m\subseteq \bigcup_{t>k} V_{k,t}$. 

Suppose now that $A \models_B \cost$; let $(A_s)$ be a $B$-computable approximation with a finite  total $\cost$-cost. For each stage~$s$, let $x_s$ be the least such that $A_s(x)\ne A_{s+1}(x)$; we let
\[
	R_s = V_{x_s,s}. 
\]
Each set~$R_s$ is clopen, and the sequence $(R_s)$ is computable in~$B$. Since $\cost(x_s,s) = \leb V_{x_s,s} = \leb R_s$, we get $\sum_s \leb R_s < \infty$. This means that $(R_s)$ is a \emph{Solovay test} relative to~$B$. 

Suppose that $Y$ is ML-random and is captured by the nested Demuth test~$(G_m)$. Since~$B$ is assumed to be $K$-trivial, it is low for ML-random, and so~$Y$ is ML-random relative to~$B$. Since $(R_s)$ is a Solovay test relative to~$B$, $Y$ cannot be captured by~$(R_s)$, meaning that $Y\in R_s$ for only finitely many~$s$. (Indeed, recall that a set is ML-random relative to~$B$ if and only if it passes all Solovay tests relative to~$B$.)

Let $s_0$ be such that $Y\notin R_s$ for all $s\ge s_0$. We now show how to compute~$A$ given $Y\oplus B$. We are given some~$x$ and we find~$A(x)$. To do that, using the oracle~$Y$, find some $t\ge s_0$ such that $Y\in V_{k,t}$; we observed above that such~$t$ exists, since $Y\in \bigcap_m G_m$. We claim that $A(x)= A_t(x)$ (here the oracle~$B$ is used, since the approximation $(A_t)$ is $B$-computable). Otherwise, $A_s(x)\ne A_{s+1}(x)$ for some $s\ge t$; so $R_s = V_{y,s}$ for some $y\le x$. Since $V_{y,s}\supseteq V_{x,s}\supseteq V_{x,t}$, $Y\in R_s$, contrary to the assumption on~$s_0$.

\subsection{A proof of (b)$\RA$(a) of \cref{thm:random_computing_characterisation}, cases (1)--(4).}

In this subsection we prove that (b)$_{\+ C}$ implies~(a) for each of the first four classes $\+C$ mentioned in \cref{thm:random_computing_characterisation}: not weakly Demuth random, superhigh, $\omega$-c.a., and superlow. The common property of these classes that is relevant here is that they are all Demuth-compatible, as defined in Nies~\cite{Nies:11}:

\begin{definition} \label{def:Demuth_compatible}
	A class $\+C\subseteq  2^{\NN}$ is \emph{Demuth-compatible} if for every Demuth test, there is some $Y\in \+C$ that passes this test. 
\end{definition}

Using some methods from \cite{Greenberg.Hirschfeldt.ea:12}, Nies showed in \cite[Section 4]{Nies:11} that the class of superlow sets, and the class of superhigh sets, are Demuth-compatible. Every superlow set is $\omega$-c.a., so the class of $\omega$-c.a.\ sets is Demuth-compatible; every $\omega$-c.a.\ is not weakly Demuth random, so the collection of sets that are not weakly Demuth random is also Demuth-compatible. 

Thus, for the four classes under consideration, it suffices to show that if $\+C$ is any Demuth-compatible class, then (b)$_{\+C}$ implies~(a). The argument combines methods from \cite{Nies:11} and \cite{Bienvenu.Downey.ea:14}. 

We remark that for this implication, we can relax the requirement that~$A$ and~$B$ be $K$-trivial:   it suffices to assume that $A$ and~$B$ are each computable from some c.e., jump-traceable (equivalently, superlow by \cite{Nies:06})  set.  This is because every $K$-trivial set is jump-traceable, and every $K$-trivial set is computable from some c.e.\ $K$-trivial set.     \cite[Cor.\:2.4]{Kjos.Nies:09} implies the following fact that we will use in the proof of the implication:  if $X$ is computable from a c.e., jump-traceable set, then $X$ is \emph{low for $\BLR{}$}, as recalled next.
 \begin{definition}[Cole and Simpson \cite{Cole.Simpson:07}] \label{def:BLR} \label{def:CS}
   	Let $B$ be an oracle. A function $f$ is \emph{bounded limit-recursive} in~$B$, denoted $\BLR(B)$, if it has a $B$-computable approximation $f(x,s)$, for which the number of mind-changes $\#\left\{ s \,:\,  f(x,s)\ne f(x,s+1) \right\}$ is bounded by a computable function.  
\end{definition}
We note that being bounded limit-recursive is a partial relativisation of the notion of $\omega$-c.a.; the full relativisation would allow a $B$-computable bound on the number of mind-changes.  
\begin{definition} We say that an oracle $X$  is \emph{low for $\BLR{}$} if  the $\BLR(X)$ functions are precisely the $\omega$-c.a.\ functions. 
\end{definition}

Associated with the notion of partial relativisation of being $\omega$-c.a.\ in \cref{def:CS} is a partial relativisation of Demuth tests. A $\DemBLR \la B \ra$ test (\cite[Def.\ 1.7]{Bienvenu.Downey.ea:14}) is defined like a Demuth test relative to~$B$, except that the function giving the $\Sigma^0_1(B)$ index of the $m$th component is $\BLR(B)$, rather than $\omega$-c.a.\ relative to~$B$. In terms of approximation, a $\DemBLR \la B \ra$ test is the limit of a $B$-computable approximation $(G_{m,s})$, with a \emph{computable} bound on the number of times the $m$th component is restarted, rather than a $B$-computable one. 

\medskip

Fix an order function~$h$, and sets~$A$ and~$B$ as described. We prove:
\begin{itemize}
	\item[$(*)$] There is a Demuth test as follows.  If $A\le_\Tur Y\oplus B$ for some $Y$ that passes the test, then $J^A$ has an $h$-bounded, $B$-c.e.\ trace. 
\end{itemize}
This suffices to show the desired implications.

\medskip

We will show~($*$) in a couple of steps. First, we will only consider a single Turing functional (reduction procedure) witnessing $A\le_\Tur Y\oplus B$. We will show that there is a $\DemBLR \la B \ra$ test as required, and then cover it by an unrelativised Demuth test. Finally, we will use a universal functional to obtain~$(*)$. 

For the time being, fix a Turing functional~$\Psi$. 

\medskip

We start by mostly following the proof of \cite[Thm.\:3.2]{Nies:11}. For $m\in \NN$ let $I_m = \left\{ x\in \NN \,:\,  2^m\le h(x) < 2^{m+1} \right\}$, so each~$I_m$ is a finite interval and together they partition~$\NN$. For $\sigma\in 2^{<\NN}$ let 
\[
	U_\sigma = \left\{ Z\in 2^{\NN} \,:\,  \sigma \preceq \Psi^{Z\oplus B}  \right\};		
\]
the sets $U_\sigma$ are $\Sigma^0_1(B)$, uniformly. For $m\in \NN$ let
\[
	\sigma_m = \bigcup \left\{ \sigma \,:\,  (\exists x\in I_m)\,\,J^A(x)\downarrow \text{ with use }\sigma\prec A\right\}. 
\]
That is, $\sigma_m$ is an initial segment $\sigma\prec A$ such that for all $x\in I_m$, $J^A(x)\downarrow \LR J^\sigma(x)\downarrow$. Note that since $J^A$ is partial, the map $m\mapsto \sigma_m$ is not $A$-computable, rather, it is $\BLR(A)$: the number of mind-changes is bounded by $|I_m|$. By assumption on~$A$, and the lowness fact mentioned above \cite[Cor.\:2.4]{Kjos.Nies:09}, the map $m\mapsto \sigma_m$ is $\omega$-c.a.; let $(\sigma_{m,s})$ be a computable approximation with a computable bound on the number of mind-changes. (For the current argument, we can allow the approximation $(\sigma_{m,s})$ to be $B$-computable rather than computable, but we need to keep the computable bound on the number of mind-changes.)

We recall some notation: for a $\Sigma^0_1(B)$ set~$V$, and rational $\epsilon >0$, we let $V^{(\le \epsilon)}$ be the result of enumerating~$V$ (with oracle~$B$), up to a point at which we see the measure exceeding~$\epsilon$. Thus, $V\subseteq V^{(\le \epsilon)}$,  $\leb(V^{(\le \epsilon)})\le \epsilon$, and $V = V^{(\le \epsilon)}$ if $\leb V\le \epsilon$. We then let 
\[
	G_m = U_{\sigma_m}^{(\le 2^{-m})},  
\]
and for each~$s$ we let $G_{m,s} = U_{\sigma_{m,s},s}^{(\le 2^{-m})}$, the result of enumerating $U_{\sigma_{m,s}}^{(\le 2^{-m})}$ for~$s$ many steps. So the approximation $(G_{m,s})$ is $B$-computable, and the $m$th component is restarted only when $\sigma_{m,s}\ne \sigma_{m,s+1}$, for which we have a computable bound. Hence, $(G_m)$ is a $\DemBLR \la B\ra $ test. 

We claim that $(G_m)$ is close to what is required in~$(*)$: if $A = \Psi^{B\oplus Y}$ for some~$Y$ that passes the test $(G_m)$, then $J^A$ has an $h$-bounded $B$-c.e.\ trace. The argument is similar to that of \cite[Thm.\:3.2]{Nies:11}. For each~$m$, $x\in I_m$, and~$s$, we enumerate $J^{\sigma_{m,s}}(x)$ into $T_x$ if it is defined at stage~$s$, and the measure of $G_{m,s}$ is exactly $2^{-m}$ (the stage~$s$ version of $G_m$ is ``full''). 

Suppose that $y,z$ are distinct elements of~$T_x$; suppose that we enumerate~$y$ into~$T_x$ at stage~$s$ (so $y=J^{\sigma_{m,s}}(x)$ and $G_{m,s}$ is full at stage~$s$), and we enumerate~$z$ into~$T_x$ at some stage~$t$. 
Since $y\ne z$, the strings $\sigma_{m,s}$ and $\sigma_{m,t}$ are distinct incomparable, which implies that $G_{m,s}$ and $G_{m,t}$ are disjoint. Since both $G_{m,s}$ and $G_{m,t}$ are full, $|T_x| \le 2^m \le h(x)$. Hence, the $B$-c.e.\ trace $(T_x)$ is $h$-bounded. Suppose that $A = \Psi^{Y\oplus B}$ and that $Y\notin G_m$. This means that $\leb(G_m) = 2^{-m}$ (otherwise at some late point we would enumerate~$Y$ into~$G_m$), indeed, for all but finitely many stages~$s$, $\leb G_{m,s} = 2^{-m}$. Hence, for all $x\in I_m$, if $J^A(x)$ is defined then $J^A(x)\in T_x$. So if $A=\Psi^{B\oplus Y}$ for some~$Y$ that passes $(G_m)$, then a finite modification of $(T_x)$ traces $J^A$. 

Toward~$(*)$, we use the fact that since $B$ is computable from a c.e., jump-traceable set, it is \emph{low for $\DemBLR$} (\cite[Thm.\:1.8, Prop.\:4.3]{Bienvenu.Downey.ea:14}): for every $\DemBLR \la B \ra$ test $(G_m)$, there is an unrelativised Demuth test $(H_m)$ that \emph{covers} $(G_m)$ in the sense that every set passing $(H_m)$ also passes $(G_m)$. Hence, there is a Demuth test $(H_m)$ with the same property as $(G_m)$: if $A = \Psi^{B\oplus Y}$ for some~$Y$ that passes $(H_m)$, then $J^A$ has an $h$-bounded $B$-c.e.\ trace. 

Finally, the argument of \cite[Lem.\:2.6]{Nies:11} gives~$(*)$. Let $(\Phi_e)$ be an effective listing of all Turing functionals, and let $\Psi^{0^e1X} = \Phi_e^X$ for all~$e$ and~$X$. Let $(H_m)$ be a Demuth test obtained for this functional~$\Psi$. The referenced lemma states that there is a Demuth test $(S_m)$ such that for all~$Y$, if $Y$ passes $(S_m)$ then for all~$e$, $0^e1Y$ passes $(H_m)$. Then $(S_m)$ is as required for~$(*)$; this completes the proof of (b)$_{\+C}\RA$(a) for all Demuth-compatible classes~$\+ C$.

\subsection{Proof of (b)$\RA$(a) of \cref{thm:random_computing_characterisation}, case (5)}

We now consider the remaining case of \cref{thm:random_computing_characterisation}: if $A\le_\Tur B\oplus \Omega_R$ for every infinite computable set~$R$,  then $A\SJR B$. In fact, using this assumption we prove that $A\models_B \cost$ for every benign cost function~$\cost$, and use \cref{thm:cost_function_characterisation}, which applies since every $K$-trivial set is jump-traceable. We generally follow arguments in \cite{Greenberg.Miller.ea:24}, which gives the result when $B = \emptyset$ . 

Let $R$ be infinite and computable. For $s>n$ we let 
\[
	k_s(n) = \floor{ - \log_2 (\Omega_s -\Omega_n)}, 
\]
and 
\[
	\cost_{\Omega,R}(n,s) = 2^{-|R\cap k_s(n)|}
\]
(see \cite[Def.\:6.3]{Greenberg.Miller.ea:24}). The cost function $\cost_{\Omega,R}$ is benign (\cite[Prop.\:9.2]{Greenberg.Miller.ea:24}). Further, if~$\cost$ is any benign cost function, then there is some infinite computable~$R$ such that for all~$n$, $\lim_s \cost(n,s)\le^\times \lim_s\cost_{\Omega,R}(n,s)$ (\cite[Prop.\:9.3]{Greenberg.Miller.ea:24}). By \cite[Thm.\:3.4]{Nies:17}, this implies that if $A\models_B \cost_{\Omega,R}$ then $A\models_B \cost$. Hence, to prove the desired implication, the following suffices:

\begin{lemma} \label{lem:6.9}
	Let~$A$ and~$B$ be $K$-trivial, and $R$ be infinite and computable. If $A\le_\Tur B\oplus \Omega_R$ then $A\models_B \cost_{\Omega,R}$. 
\end{lemma}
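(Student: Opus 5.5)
Following the approach of \cite{Greenberg.Miller.ea:24} for the case $B=\emptyset$, I will build the required approximation from the reduction $A=\Phi^{B\oplus\Omega_R}$, using $B$ as an oracle and using the left-c.e.\ approximation of $\Omega$ to pace it. The intuition is this. The value $k_s(n)$ measures how many bits of $\Omega$ are known to be stable from the standpoint of stage $n$ at stage $s$. A change of the approximation to $\Omega$ below $k_s(n)$ is paid for by the increase $\Omega_s-\Omega_n$. The cost $\cost_{\Omega,R}(n,s)=2^{-|R\cap k_s(n)|}$ reflects that only the bits of $\Omega$ in locations from $R$ are read by the reduction.

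The plan has the following steps.

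\textbf{Step 1: set up the reduction.} Fix a Turing functional $\Phi$ with $A=\Phi^{B\oplus\Omega_R}$. Let $\Omega_{R,s}$ denote $\Omega_s$ restricted to $R$. This is a computable approximation of $\Omega_R$ in which a bit at position $j$ of $R$ changes only when $\Omega_s$ changes at the corresponding location.

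\textbf{Step 2: define the approximation.} Using $B$, define $A_s$ to be $\Phi^{B\oplus\Omega_{R,s}}_s$ on the longest initial segment where it converges. To obtain a total approximation we speed up: pass to a $B$-computable sequence of stages $s_t$ at which the computation is ``expansionary'', meaning that $A_{s_t}$ has length at least $t$. Since $\Phi^{B\oplus\Omega_R}$ is total and $\Omega_{R,s}\to\Omega_R$, this sequence exists and $A_{s_t}\to A$.

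\textbf{Step 3: bound the cost of a change.} Suppose $A_{s_t}$ and $A_{s_{t+1}}$ first differ at $x$. Then the oracle segment $\Omega_R$ below the $\Phi$-use $u$ of $x$ must have changed. Hence $\Omega_s$ changed at some position $j$ in $R$ with $j<u$, which forces $\Omega_{s_{t+1}}-\Omega_{s_t}\ge 2^{-j-1}$ roughly. The key point is to compare $u$ with $k_{s_{t+1}}(x)$. If the use is monotone in $x$ and $x\le s_t$, the cost is $\cost_{\Omega,R}(x,s_{t+1})=2^{-|R\cap k|}$.

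\textbf{Step 4: sum via the Solovay/left-c.e.\ argument.} The aim is to sum these costs as in \cite{Greenberg.Miller.ea:24}. I expect to show $\sum_t \cost_{\Omega,R}(x_t,s_{t+1})$ is finite by charging each change either to an increase of $\Omega$ (the total of which is at most $1$), or to the $K$-triviality of $A$. This is where the hypotheses enter: $K$-triviality of $A$ gives $A\models\cost_\Omega$, and lowness of $B$ for ML-randomness gives that $\Omega_R$ is random relative to $B$. The relativised argument of \cite[Sec.~6]{Greenberg.Miller.ea:24} should then bound the bad changes by a $B$-Solovay test on $\Omega_R$. Concretely, each change whose cost exceeds what can be charged to $\Omega$ produces a $B$-c.e.\ clopen set of measure about $\cost$ that contains $\Omega_R$, namely the set of oracles agreeing with the old computation. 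If the total cost were infinite, $\Omega_R$ would fail a $B$-Solovay test. Lowness of $B$ converts this into a contradiction with the randomness of $\Omega_R$, after the usual trimming to a subsequence of changes to make the test sum finite.

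\textbf{Main obstacle.} The hardest part is Step 4: making sure the clopen sets are really $B$-c.e.\ and that their measures match $2^{-|R\cap k_s(n)|}$. This requires relating the use of $\Phi$ on $\Omega_R$ to the bits of $\Omega$ below $k_s(n)$. It also requires handling the fact that changes of $B$'s approximation are absent, since $B$ is an oracle here, which actually simplifies matters. I would first try to adapt the proof of \cite[Thm.~6.x]{Greenberg.Miller.ea:24} verbatim, with $B$ added as an oracle throughout. I would invoke that $B$ is low for ML-randomness only at the final Solovay-test step, and use $K$-triviality of $A$ (via the cost function $\cost_\Omega$) only if a direct charging argument fails.
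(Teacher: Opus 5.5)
\textbf{There is a genuine gap.} Steps 2--4 do not amount to a proof, and they are not how \cite{Greenberg.Miller.ea:24} or the paper proceeds.

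\emph{The approximation in Step 2 has no cost control.} Step 3 shows at best that a change of $\Omega_R$ below the use makes $k_s(x)$ small, i.e.\ makes the change \emph{expensive}. It gives no bound on the number of expensive changes. (The claimed bound $\Omega_{s_{t+1}}-\Omega_{s_t}\ge 2^{-j-1}$ also fails, because of carries.) Suppose a change is matched by an increase of $\Omega$ of about $2^{-k}$, with $k=k_s(x)$. That increase pays only $2^{-k}$, but the change costs $2^{-|R\cap k|}=2^{-k}\cdot 2^{|R^\complement\cap k|}$. Supplying the missing factor $2^{|R^\complement\cap k|}$ is the entire content of the lemma; for co-finite $R$ it collapses to $A\models\cost_\Omega$, which is how the paper disposes of that case. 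In the paper this factor comes from the complementary half $X=\Omega_{R^\complement}$. The sets $U_n$ contain $X$ and satisfy $\leb(U_{n,s})\le^\times\cost_{\Omega,R^\complement}(n,s)$. The ravenous-sets construction is then run in the product space $2^\omega\times 2^\omega$ against the pair $(X,Y)$, where $Y=\Omega_R$.

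\emph{The Solovay-test step in Step 4 is inverted.} If the total cost were infinite, your clopen sets would have infinite total measure, so they would form no test. No ``usual trimming'' yields a finite-measure subsequence that still captures $\Omega_R$ infinitely often. Moreover, those sets are determined by the use $u$, so their measure is not comparable to $\cost_{\Omega,R}(x,s)$. And since the old computation has been overturned, there is no reason for $\Omega_R$ to lie in them.

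\emph{You are missing the two-step structure and the density argument.} The paper first proves the lemma for $A$ c.e.\ in $B$. It works with a $B$-computable \emph{enumeration} $(A_s)$ of $A$, not the approximation read off from the reduction. The error sets $E_s$ (oracles $Z$ with $\Phi^Z_s$ strictly left of $A_s$) then increase, so $2^\omega\setminus E$ is a $\Pi^0_1(B)$ class containing $Y$. That is exactly what the density fact $(\otimes)_1$ requires. The key verification $(X,Y)\notin\bigcap_k V^k\cap Q$ needs $2^\omega\setminus E$ to have positive density at $Y$. Via $(\otimes)_1$ and the relativised Franklin--Ng characterisation $(\otimes)_2$, this requires $Y\oplus B\not\ge_\Tur\emptyset'$, using $B'\equiv_\Tur\emptyset'$. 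That is where the Day--Miller theorem \cite{Day.Miller:14} enters, together with $R$ being co-infinite so that $Y$ is Turing incomplete. This ingredient has no counterpart when $B=\emptyset$, so relativising ``verbatim'' is not enough. Likewise, lowness of $B$ for ML-randomness is needed throughout, to make $Y$ and $(X,Y)$ random relative to $B$, not only at a final step.

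\emph{The general case needs a separate reduction.} It is reduced to the $B$-c.e.\ case via $(\otimes)_3$ and $(\otimes)_4$. A partially relativised golden run, using that $B$ is low for $K$ so $A$ is $K$-trivial relative to $B$, produces a $B$-c.e.\ $K$-trivial $D$ with $A\le_\Tur B\oplus D\le_\Tur B\oplus\Omega_R$. Obedience then transfers from $D$ to $A$ by the Barmpalias--Downey lemma applied to the $K$-trivial set $D\oplus B$.
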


This is the required partial relativisation of \cite[Lem.\:6.9]{Greenberg.Miller.ea:24}. We will follow the proof of this lemma. As it is fairly long, we will not copy all the details; rather, we will give the overall structure of the proof, and indicate how it needs to be modified to accommodate $B\ne \emptyset$. 

The proof has two main steps. We first prove \cref{lem:6.9} assuming that~$A$ is c.e.\ relative to~$B$; we then show how to remove this extra assumption.

\subsubsection*{Proof of \cref{lem:6.9} assuming that $A$ is c.e.\ in~$B$}
We may assume that~$R$ is co-infinite; otherwise, $\cost_{\Omega,R}=^\times \cost_{\Omega}$, and since~$A$ is $K$-trivial, it obeys $\cost_\Omega$ (with no need for $B$'s help). For $n\in \NN$ let
\[
	U_n  = \bigcup_{s\ge n}  \big[ (\Omega_s)_{R^\complement}\rest{|R^\complement \cap n|}  \big]
\]
(the string in the definition is the result of erasing the bits in locations in~$R$ from the string $\Omega_s\rest{n}$); let $U_{n,s}$ be the stage~$s$ enumeration of~$U_n$. Then $\leb(U_{n,s}) \le^\times \cost_{\Omega,R^\complement}(n,s)$ and $\Omega_{R^\complement}\in \bigcap_n U_n$. 
Following the notation in \cite{Greenberg.Miller.ea:24}, let $Y = \Omega_R$ and $X = \Omega_{R^\complement}$. Fix a $B$-computable functional~$\Phi$ such that $A = \Phi^Y$. Fixing a $B$-computable enumeration $(A_s)$ of~$A$, we let $E_s$, the stage~$s$ error set, be the collection of~$Z$ such that $\Phi_s^Z$ lies strictly to the left of $A_s$, and $E = \bigcup_s E_s$ is the set of~$Z$ such that $\Phi^Z$ lies to the left of~$A$. We let $Q_s = 2^\omega\times (2^\omega\setminus E_s)$ (and $Q = 2^\omega\times (2^\omega\setminus E)$). We perform the ``ravenous sets'' construction precisely as in \cite{Greenberg.Miller.ea:24}, except that now the construction is computable in~$B$. We obtain sets $V^k_n$, and let $V^k = \bigcup_n V^k_n$. 

The main part of the verification that we need to discuss is that $(X,Y)\notin \bigcap_k V^k\cap Q$. The first step of this is the equivalence between that statement, and $Q$ having positive density at $(X,Y)$. When $B = \emptyset$ this is \cite[Lem.\:3.3]{Bienvenu.Hoelzl.ea:14} (quoted as \cite[Fact\:8.1]{Greenberg.Miller.ea:24}). The relativisation of this fact gives:
\begin{description}
	\item[$(\otimes)_1$] Suppose that $Y$ is random relative to~$B$, $P$ is a $\Pi^0_1(B)$-class, and $Y\in P$. Then $P$ has density~0 at~$Y$ if and only if $Y$ fails a $B$-difference test on~$P$. 
\end{description}
Here a $B$-difference test on~$P$ is a test of the form $(P\cap O_n)$, where $(O_n)$ are uniformly $\Sigma^0_1(B)$ and $\leb(P\cap O_n)\le 2^{-n}$. We also need the following relativisation of a characterisation of difference randomness from \cite{Franklin.Ng:10}: if $Y$ is a  ML-random set,
\begin{description}
	\item[$(\otimes)_2$]   $Y$ passes all $B$-difference tests if and only if $Y\oplus B\ge_\Tur B'$. 
\end{description}
Now we argue as follows. Since $Y$ is ML-random and $B$ is $K$-trivial, $Y$ is $B$-random. Since~$B$ is $K$-trivial, it is low, i.e., $B'\equiv_\Tur \emptyset'$. Now~$Y$ is Turing incomplete (since~$R$ is co-infinite), that is, $Y\nge_\Tur \emptyset'$. By a result of Day and Miller \cite{Day.Miller:14}, since~$B$ is $K$-trivial, $Y\oplus B\nge_\Tur \emptyset'$ (this result is not needed in \cite{Greenberg.Miller.ea:24}). By $(\otimes)_2$, $Y$ passes all $B$-difference tests. Since $Y\notin E$, by $(\otimes)_1$, $2^\omega\setminus E$  has positive density at~$Y$ (again we use the fact that $Y$ is $B$-random, as $B$ is $K$-trivial). It follows that $Q$ has positive density at $(X,Y)$. Again since $(X,Y)$ is ML-random, it is $B$-random. By $(\otimes)_1$ again, $(X,Y)\notin \bigcap_k V^k\cap Q$, as required. 

The rest of the verification in \cite{Greenberg.Miller.ea:24} goes through when relativising to~$B$, finishing the proof of \cref{lem:6.9} when $A$ is c.e.\ relative to~$B$.

\subsubsection*{Proof of \cref{lem:6.9} without extra assumptions}
As in \cite{Greenberg.Miller.ea:24}, this case follows from the special case above using two facts.
\begin{description}
	\item[$(\otimes)_3$] If $A\le_\Tur B\oplus \Omega_R$ then there is some $B$-c.e., $K$-trivial set~$D$ such that $A\le_\Tur B\oplus D$ and $D\le_\Tur B\oplus \Omega_R$. 

	\item[$(\otimes)_4$] If~$D$ is $K$-trivial, $D\models_B \cost_{\Omega,R}$ and $A\le_\Tur D\oplus B$ then $A\models_B \cost_{\Omega,R}$. 
\end{description}

We start with the latter, following the proof in \cite[Lem.\:8.2]{Greenberg.Miller.ea:24} (which is similar to \cite[Prop.\:2.3]{Greenberg.Miller.etal:19}). Let~$\psi$ be the use of a computation of~$A$ from $D\oplus B$. The $K$-trivial degrees are closed under taking joins, so $D\oplus B$ is $K$-trivial. Hence \cite[Lem.\:2.5]{Barmpalias.Downey:14} applies, so $\Omega-\Omega_n \le^\times \Omega-\Omega_{\psi(n)}$. The rest of the proof of \cite[Lem.\:8.2]{Greenberg.Miller.ea:24} follows without changes. 

We turn to verify $(\otimes)_3$, which follows from a partial relativisation of \cite[Thm.\:3.1]{Greenberg.Miller.ea:24}. For this, we need a partial relativisation of \cite[Lem.\:3.2]{Greenberg.Miller.ea:24}:

\begin{description}
	\item[$(\otimes)_5$] Suppose that $A$ and~$B$ are $K$-trivial. There is a $B$-computable approximation $(A_s)$ of~$A$ such that if $X$ is ML-random and $A = \Phi(X,B)$ then for almost all~$n$, if $A\rest{n}\preceq \Phi_s(X,B)$ then $A\rest{n} = A_t\rest{n}$ for all $t\ge s$. 
\end{description}
This follows from the proof in \cite{Greenberg.Miller.ea:24}, using a partially relativised version of the ``main lemma'' derived from the golden run construction \cite[5.5.1]{Nies:book}; we need the prefix-free machine~$M$ to be $B$-computable, and the sequence of stages $(q(i))$ to be $B$-computable. Again, we follow the golden run construction without changes, relativising all to~$B$. We use the fact that $B$ is low for~$K$, so $A$ is $K$-trivial relative to~$B$: $K^B(A\rest{n}) =^+ K(A\rest{n}) =^+ K(n) =^+ K^B(n)$. The added constants are incorporated into the construction. 

This completes the proof of \cref{lem:6.9}, and so of \cref{thm:random_computing_characterisation}.

\section{Concluding remarks and open questions} \label{sec:conclu}

In this section we mention some corollaries of our main theorems, discuss related results, and state some open questions.  

The very first characterisation  of strong jump-traceability was  in terms of Kolmogorov complexity~\cite{Figueira.ea:08}. This extends to a characterisation of  the weak reducibility $\SJR$. Informally,   $A\SJR B$ if and only if  the plain descriptive string complexity $C^A$ is ``almost'' lower-bounded by~$C^B$. The formal version follows.

\begin{proposition} The following are equivalent for all sets $A,B$. 
	\begin{itemize}
		\item[(a)] $A \SJR B$;
		\item[(b)] $\fa x \ [C^B(x) \lep  C^A(x) +  h(C^A(x))]$ for each order function $h$.  
	\end{itemize}
\end{proposition}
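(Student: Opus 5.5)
This is the partial relativisation of the Figueira--Nies--Stephan characterisation of strong jump traceability, and I would follow their proof, checking that each step only relativises the appropriate constituent.

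\emph{(a)$\RA$(b).} Fix an order function~$h$. Let $\mathbb U$ be the universal plain machine, so that $C^A(x)=\min\{|p|:\mathbb U^A(p)=x\}$. Consider the $A$-partial computable function $\psi$ with $\psi(\la e\ra)$ defined as follows: run $\mathbb U^A$ on all programs of length~$e$ in parallel, and list the outputs in order of convergence. From this we want to trace the set of short $A$-descriptions. Concretely, for $n$ coding a pair $(e,i)$, let $\psi(n)$ be the $i$th output of $\mathbb U^A$ on a program of length~$e$, for $i<2^e$.

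Choose an order function $g$ that grows so slowly in terms of $h$ that the trace size for the inputs with parameter $e$ is at most $2^{h(e)/2}$, say. Apply (a) to get a $B$-c.e., $g$-bounded trace $(T_n)$ for $\psi$ (via $J^A$, after a fixed reduction).

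Now build a $B$-machine $M$. Each element $y\in T_{(e,i)}$ receives a description of length $e+\lceil h(e)/2\rceil+O(\log\text{stuff})$: the program encodes $e$ implicitly through its length, together with $i$ (using $e$ bits) and the index of $y$ in the enumeration of $T_{(e,i)}$ (using $\log g$ bits). Since $g$ was chosen so that $\log g((e,i))\le h(e)$, this gives
\[
C^B(x)\lep C^A(x)+h(C^A(x)),
\]
with care taken for the $\log$-cost of making the description self-delimiting inside plain complexity. That cost is absorbed because the length determines $e$ once $h$ is slow enough. Since $h$ is arbitrary, replacing $h$ by $h/2$ handles the constants.

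\emph{(b)$\RA$(a).} Fix $h$. Given $n$, suppose $J^A(n)\converge$. Then the string $x_n=\la n,J^A(n)\ra$ satisfies $C^A(x_n)\le 2\log n+O(1)$; more usefully, $C^A(x_n)\lep C(n)$. Apply (b) with a suitably slow order $h'$, and let $T_n$ be the set of $y$ such that $C^B(\la n,y\ra)\le C(n)+h'(C(n))+d$. The difficulty is that $C(n)$ is not computable. Following Figueira et al., I would instead use lengths: the trace bound for $T_n$ is then $2^{C(n)+h'(\cdots)}$, which is far too big.

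The fix, as in the original, is to make the $A$-description of $x_n$ short relative to a parameter we can compute. Encode $n$ with a computable bijection into a block where inputs $n$ with $h(n)\ge 2^{k}$ are indexed so that $C^A(x_n)\le \log m_n+O(1)$, where $m_n$ is computable. Then $T_n$ consists of those $y$ having a $\mathbb U^B$-description of length at most $\log m_n+h'(\log m_n)+d$, and it is $B$-c.e. I would try, first, the standard trick: take $h'$ so slow that the count of such descriptions, relative to the number of $n$ sharing the same description length, is at most $h(n)$. That is, use descriptions of $x_n$ that give $n$ \emph{implicitly} via a program whose length determines $n$ up to $2^{h'}$ choices.

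\emph{Main obstacle.} The hardest step is this counting in (b)$\RA$(a): obtaining an $h$-bounded trace from upper bounds on $C^B$, which requires arranging that $C^A$ of the relevant strings is essentially $\log$ of an index computable from $n$, with only $2^{h'}$ slack. Everything relativises directly, since only $J^A$ and the machine $\mathbb U^B$ carry oracles while the orders stay computable. Transitivity-type issues do not arise.
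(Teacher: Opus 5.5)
Your (a)$\RA$(b) is essentially right. One correction: there must be no ``$O(\log\text{stuff})$'' term, since a logarithmic overhead is not absorbed by $h(C^A(x))$ when $h$ grows slowly. None is needed. Write the position of $x$ in $T_{\la e,i\ra}$ with exactly $\lfloor h(e)/2\rfloor$ bits; for this, identify $(e,i)$ with the $i$th string of length $e$, so that an order $g$ with $g(\la e,i\ra)\le 2^{\lfloor h(e)/2\rfloor}$ exists. Then the program length $e+\lfloor h(e)/2\rfloor$ is strictly increasing in $e$ and determines $e$ on its own.

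The genuine gap is in (b)$\RA$(a), which you leave unresolved, and the route you sketch cannot work. Any scheme that tests one string per input, putting $y$ into $T_n$ when $C^B(\la n,y\ra)\le \ell_n$, only gives $|T_n|<2^{\ell_n+1}$. Here the computable bound $\ell_n$ must dominate $C^A(\la n,J^A(n)\ra)$, which is typically of order $\log n$, far above $\log h(n)$. Dividing ``by the number of $n$ sharing the same description length'' is not legitimate. Counting bounds the \emph{sum} of the $|T_n|$ over such a group, not each one. Nothing prevents many $y$ from giving short $B$-descriptions of $\la n,y\ra$ for a single simple $n$.

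The missing idea is to make each candidate value pay for exponentially many descriptions at once.

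The $A$-machine $\sigma\mapsto\la\sigma,J^A(|\sigma|)\ra$ gives a constant $c$ with $C^A(\la\sigma,J^A(n)\ra)\le n+c$ for \emph{every} $\sigma$ of length $n$. Fix an order $h'$ and the constant $d$ that (b) provides for it, and put $L_n=n+c+h'(n+c)+d$. Let $T_n$ be the set of $y$ such that $C^B(\la\sigma,y\ra)\le L_n$ for all $2^n$ strings $\sigma$ of length $n$.

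This $T_n$ has the required properties:
\begin{itemize}
\item It is uniformly $B$-c.e.
\item It contains $J^A(n)$ when defined, by (b) together with monotonicity of $t\mapsto t+h'(t)$.
\item Each $y\in T_n$ accounts for $2^n$ distinct strings of $C^B$-complexity at most $L_n$. Hence $2^n|T_n|<2^{L_n+1}$, that is, $|T_n|<2^{h'(n+c)+c+d+1}$.
\end{itemize}

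Since $d$ depends on $h'$, choose $h'$ computably from $h$ and $c$ so that $2^{h'(n+c)}\le\sqrt{h(n)}$. Then $|T_n|\le h(n)$ for almost all $n$, and you patch the finitely many exceptions by hand.

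Only $C^A$ and $C^B$ carry oracles here, and $h'$ stays computable. That is exactly the partial relativisation the paper appeals to when it simply cites the $B=\emptyset$ proof of Figueira, Nies and Stephan and notes that it carries over.
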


\cite[Thm.\ 17]{Figueira.ea:08} states this for the case that $B= \ES$ (also see  \cite[Cor 8.4.32]{Nies:book}). The proof gives the required partial relativisation, where we still almost lower-bound $C^A$ (rather than $C^{A \oplus B}$), and the functions $h$ are computable.

\subsection{The structure of the SJT degrees}

Not much is known about the degree structure given by $\SJR$. We can state the following: 

\begin{proposition} \label{prop:countable}
If $B$ is jump traceable, then the set $\{ A : A \SJR B\}$ is countable.
\end{proposition}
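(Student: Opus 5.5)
Since $\SJR$ implies $\le_{\mathrm{JT}}$, the hypotheses give that $A$ is jump-traceable by $B$ via some $B$-c.e.\ trace bounded by a computable order function. Because $B$ is jump traceable, I will trace this $B$-c.e.\ trace once more by an unrelativised c.e.\ trace, and then show that any $A$ passing through this procedure is determined by countably many parameters. Concretely, the aim is to prove that every $A\SJR B$ is jump-traceable by a trace taken from a countable family, and that each such trace admits only countably many sets $A$. One could try to show directly that the $A$ with $A \SJR B$ are precisely the $B$-trivial-like sets, but the cleaner route runs as follows.

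First step. Fix a single order function, say $h(n)=n+1$ after suitable padding. There are only countably many $B$-c.e.\ traces $\seq{T_n^B}\sN n$, since each is determined by an index. So it suffices to show that for a fixed $h$-bounded $B$-c.e.\ trace, only countably many $A$ have $J^A$ traced by it. Because $B$ is fixed, each $T^B_n$ is just a fixed finite set of size at most $h(n)$.

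Second step. I use the standard argument that a trace of the jump pins down a set up to a $\Pi^0_1$-type condition relative to the trace. Consider the $A$-partial computable function $\psi(n)=A\uhr{n}$, realised as $J^A(p(n))$ for a computable $p$. If $\seq{T_n}$ traces $J^A$, then $A\uhr{n}\in T_{p(n)}$ for all $n$. The class of such $A$ is $\{Z : \forall n\ Z\uhr{n}\in T_{p(n)}\}$, a closed set whose $n$-th level has at most $h(p(n))$ strings. This alone does not make it countable, since the width can grow. The key point is that the same $A$ must be traced for \emph{every} order function. Taking $h$ so that $h(p(n))$ is bounded would give a finite set, but order functions must be unbounded. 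So instead I would trace the sequence $A\uhr{n}$ for all $n$ in an interval $[2^k,2^{k+1})$ by a single $J^A$-value. The trace set then contains at most $h(k)$ candidate strings of length $2^{k+1}$. Choosing $h$ growing slower than any infinite-branching rate, the resulting closed class $P$ has at most $h(k)$ strings at level $2^{k+1}$, hence only $\lim_k h(k)$ many paths if bounded. Since $h$ is unbounded, however, this still fails, so I will use a different mechanism.

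Third step, the main obstacle. Here jump traceability of $B$ is essential. Trace $B$'s own use: the finite sets $T^B_n$ are $B$-computable canonical indices, hence a $B$-partial computable function. The jump traceability of $B$ gives a c.e.\ trace $V_n$ of these indices with computable bound. Hence $T_n^B$ lies among computably boundedly many c.e.\ candidate finite sets, which can be merged into an unrelativised c.e.\ trace of $J^A$ with computable bound $g$. This shows that every $A\SJR B$ is jump traceable. That alone is not enough, because the jump traceable sets include a perfect $\PI1$ class.

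So the final step must combine the following: $A$ is $\Delta^0_2$, which I expect to derive by showing that an $h$-bounded c.e.\ trace of $A\uhr{n}$ with the full trace family yields a computable approximation. A $\Delta^0_2$ set is one of countably many. Specifically, from the merged unrelativised trace of $n\mapsto A\uhr{n}$, combined with the fact that $B$'s $h$-bounded trace has size $\le h$ while its unrelativised cover is larger, I would extract $A$ as the unique path through the tree generated by strings that persist. I expect this extraction — showing that $A$ is an isolated path, or else $\Delta^0_2$ — to be the hard part. I would attempt it by a counting argument similar to the Claim bounding promotions in the proof of \cref{thm:cost_function_characterisation}.
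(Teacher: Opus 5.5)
\textbf{Where the proposal breaks.} Your third step is right in substance. From $A\SJR B$ you get $A\le_{\textup{JT}} B$, and since $B$ is jump-traceable, so is $A$. There is one small slip: $T^B_n$ is only $B$-c.e., so its canonical index is not $B$-computable. Trace instead the $B$-partial function sending $\seq{n,i}$ to the $i$th element enumerated into $T^B_n$, as in the proof that $\le_{\textup{JT}}$ is transitive.

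The genuine gap is the final step, which is the only place countability could come from. It is left as an expectation, and the conclusion it aims at is false. You try to show that $A$ is $\DII$, via a \emph{computable} approximation or as an isolated path of a tree built from the merged \emph{unrelativised} trace. Any argument that puts $A$ into a countable family independent of $B$ must fail. Since $\leT$ implies $\SJR$, we have $B\SJR B$ for every $B$. The jump-traceable sets contain a perfect $\PI 1$ class, so some jump-traceable $B$ is not $\DII$, and $A=B$ refutes your target. Your first step has a related defect: it fixes a single order function, discarding the ``for every order function'' hypothesis that separates $\SJR$ from $\le_{\textup{JT}}$.

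\textbf{The missing idea.} The approximation must be computable \emph{in $B$}. The right target is $A\in\Delta^0_2(B)$, which is a countable class because $B$ is fixed. This is exactly what the paper obtains from \cref{thm:cost_function_characterisation}. Your third step supplies the hypothesis that $A$ and $B$ are jump-traceable. Direction (a)$\Rightarrow$(b) then gives $A\models_B\cost$ for every benign $\cost$, in particular a $B$-computable approximation of $A$, and the proof is done.

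The ``trace the trace'' construction and the promotion-counting claim you allude to are precisely the proof of that direction. What they produce is a $B$-computable approximation, not a computable one. So you should invoke that theorem as a black box rather than attempt a new extraction aimed at an unrelativised conclusion.
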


In particular, the $\SJR$-degree of~$B$ is countable, and it bounds only countably many $\SJR$-degrees. 

\begin{proof}
By \cref{thm:cost_function_characterisation}, if $B$ is jump traceable and $A \SJR B$, then $A \models_B \cost$ for any benign cost function.  In particular, $A \in \Delta^0_2(B)$, which is a countable set.
\end{proof}

In contrast, not all initial segments of the degree structure induced by  $\SJR$ are countable:
Ng~\cite[Th.\ 6.2.1]{Ng:thesis} proved that 
the set $\{A : A \SJR \emptyset'\}$ contains a perfect class.
 

Like the $K$-trivials, the strongly jump-traceable degrees form an ideal. In particular, they are closed under taking joins. We ask:

\begin{question} \label{question:join}
	Suppose that~$B$ is jump-traceable. Is  the class $\{ A : A \SJR B\}$ closed under taking joins?
\end{question}

By the argument of the proof of \cref{thm:A_to_A_join_B} (see Section~\ref{ss:proof Thm 1.6}), for an affirmative answer to \cref{question:join}, it would suffice to show that if $A_0,A_1\SJR B$ and $B$ is jump-traceable then $A_0\oplus A_1 \le_{\textup{JT}} B$. We could even hope for the following: there is a benign cost function~$\cost$ such that if $B$ is jump-traceable and $A\models_B \cost$ then $A$ is jump-traceable (equivalently $A\le_{\textup{JT}} B$). We remark that \cref{question:join} for $B=\emptyset$ was first proved directly, without using cost functions (see \cite{Cholak.Downey.ea:08} for the c.e.\ case); it would be interesting to see if ideas from that argument would be useful. 

\smallskip

It is open whether the c.e.\ SJT-degrees are dense. We conjecture that  for each  c.e.\ set~$E$ that is not strongly jump traceable,  there are c.e.\ sets $A,B \le_\Tur E$ that are $\SJR$-incomparable.  A certain obstacle to showing such structural results  is   the theorem of Ng~\cite{Ng:thesis} that the least degree in the c.e.\ SJT-degrees, given by the  strongly-jump traceable c.e.\ sets, has  a $\PI 4$ complete index set. Examining the definition, one sees that the arithmetical complexity of the reducibility itself on the c.e.\ sets is also $\PI 4$.   In contrast, Turing reducibility is $\SI 4$, and the least degree is merely $\SI 3$.

\subsection{Relationship with LR-reducibility}

Recall the weak reducibility $\le_\LRR$ mentioned in the introduction: $A\le_{\LRR} B$ if $\MLR^B\subseteq \MLR^A$. Every strongly jump-traceable set is $K$-trivial, that is, if $A\SJR \emptyset$ then $A \le_\LRR \emptyset$. 

\begin{question}
	Does $ A \SJR B$ imply $A\le_{\LRR} B$?
\end{question}

Here \cref{thm:A_to_A_join_B} may be relevant: the relation  $A\oplus B \le_{\LRR} B$ is much better understood than $A\le_{\LRR} B$, as it has a characterisation in terms of $K$-triviality. In particular, this characterisation implies that every $\le_{\LRR}$-degree is countable, even though some $\le_{\LRR}$-degrees bound uncountably many such degrees. In contrast, we do not know whether every $\SJR$-degree is countable.

In general, it would be interesting to give characterisations of other weak reducibilities using the three paradigms discussed in the introduction. In particular, we suggest the reducibilities $A\le_\LRR B$, $A\le_{\textup{JT}} B$, and $A'\le_{\textup{tt}} B'$; see \cite[Sec.\:8.4]{Nies:book}.

\subsection{Relationship with ML-reducibility}

The following relation is a main topic of the article \cite{Greenberg.Miller.ea:24}:

\begin{definition} \label{def:ML_reducibility}
	For sets $A,B \subseteq \NN$, one  writes $A\le_{\textup{ML}} B$ if for every ML-random~$Y$, $Y\ge_\Tur B$ implies $Y\ge_\Tur A$. 
\end{definition}

This relation is particularly useful in understanding the   structure of the $K$-trivial Turing degrees. It satisfies the conditions for being a weak reducibility, except that it is not known to be  an arithmetical relation, even when restricted to the $K$-trivials. 

There are certain differences between the reducibilities $\le_{\textup{ML}}$ and $\SJR$ on the $K$-trivial sets. For example, the least $\le_{\textup{ML}}$-degree consists only of the computable sets, not all strongly jump-traceable sets (this shows that $\SJR$ does not imply $\le_{\textup{ML}}$ on the $K$-trivials). Further, there is a greatest $\le_{\textup{ML}}$-degree of $K$-trivials (called the ``smart'' $K$-trivials in \cite{Bienvenu.Greenberg.ea:16,Greenberg.Miller.ea:24}).  By the following proposition there is no greatest $\SJR$-degree among the $K$-trivials; in particular, $\le_{\textup{ML}}$ does not imply $\SJR$ on the $K$-trivials.

\begin{proposition} \label{prop:K-triv} For each $K$-trivial set $B$, there is a c.e.\ $K$-trivial set  $C \ge_T B$ such that $C \not \SJR B$. \end{proposition}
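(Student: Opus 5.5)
The plan is to reduce to the existence of a single c.e.\ $K$-trivial set $D$ with $D\not\SJR B$, and then pad it with a c.e.\ set above $B$. Since $B$ is $K$-trivial, it is computable from some c.e.\ $K$-trivial set $\hat B$. Given $D$, put $C=\hat B\oplus D$. This $C$ is c.e., it is $K$-trivial because the $K$-trivial degrees are closed under joins, and $C\ge_\Tur \hat B\ge_\Tur B$. If we had $C\SJR B$, then since $D\le_\Tur C$, $\leT$ implies $\SJR$, and $\SJR$ is transitive, we would get $D\SJR B$, which is a contradiction. So everything reduces to building $D$.

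To build $D$ I would use \cref{thm:cost_function_characterisation}. Both $B$ and any $K$-trivial $D$ are jump-traceable, so $D\SJR B$ would give $D\models_B\cost$ for every benign $\cost$. I therefore fix one benign cost function $\cost$ that goes to zero slowly, for instance $\cost_{\Omega,R}$ for a sufficiently sparse infinite computable $R$, which is benign by \cite[Prop.\:9.2]{Greenberg.Miller.ea:24}. The goal is then to build a c.e.\ set $D$ satisfying two conditions:
\begin{itemize}
\item[(i)] $D$ obeys $\cost_\Omega$, so $D$ is $K$-trivial;
\item[(ii)] for every $e$, if $\Phi_e^B$ is total and is a computable approximation of $D$, then its total $\cost$-cost is infinite.
\end{itemize}
Condition (ii) is the partial relativisation of the standard argument that for any benign cost function that is not dominated by $\cost_\Omega$ one can build a c.e.\ set obeying $\cost_\Omega$ but not $\cost$, as in the $B=\emptyset$ case from \cite{Greenberg.Nies:11}.

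The construction is a finite-injury argument. Requirement $P_e$ reserves a block of numbers $x$ at which $\cost(x,s)$ remains large relative to the $\cost_\Omega$-cost of enumerating $x$; this is possible because $\cost$ is chosen to dominate $\cost_\Omega$ by an unbounded factor on a computable sequence of points. The requirement repeatedly waits until the approximation $\Phi_e^B$ agrees with $D_s$ up to $x$, and then enumerates $x$ into $D$. This forces $\Phi_e^B$ to change below $x$ at a later stage, at a cost of at least roughly $\cost(x,s)$. Each requirement acts only boundedly often, with a budget of $2^{-e}$ in $\cost_\Omega$-cost, while the forced $\cost$-cost accumulates past any finite bound.

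The obstacle is that $B$ is not computable, so the construction of $D$, which must be computable because $D$ is c.e., cannot query $\Phi_e^B$ directly. I would first try to get around this using the lowness of $B$. Since $B$ is $K$-trivial, we have $B'\le_\Tur\emptyset'$, and in fact $B$ is superlow and jump-traceable. This lets us guess computably whether $\Phi_e^B\rest x$ agrees with $D_s\rest x$, with a computably bounded number of mind changes (the ``trace the trace'' device used in the proof of (a)$\RA$(b) of \cref{thm:cost_function_characterisation}). A wrong guess makes $P_e$ waste an enumeration. The bounded number of mind changes ensures that the wasted $\cost_\Omega$-cost can be absorbed by allotting each $P_e$ a budget scaled by the computable bound on its mind changes. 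The delicate accounting is to choose $R$ sparse enough that $\cost$ beats $\cost_\Omega$ by more than this computable factor; I expect this calibration to be the main technical step.
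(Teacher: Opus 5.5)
Your padding step is fine; it is essentially the last step of the paper's proof. The appeal to (a)$\Rightarrow$(b) of \cref{thm:cost_function_characterisation} is also fine. The gap is the construction of $D$. Because you insist that $D$ be c.e., a computable construction must diagonalise against $B$-computable approximations, and your guessing device does not handle this.

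Every attack by $P_e$ needs a fresh $\Sigma^0_1(B)$ question (``does $\Phi_e^B$ agree with $D_s\rest(x+1)$ at some stage $t\ge s$?''). Each fresh question carries its own mind-change allowance. So there is no computable bound, fixed before you choose $R$, on the number of false positives per requirement.

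Worse, an enumeration triggered by a false positive need not cost the opponent anything. Consider a computable approximation that simulates your construction far ahead, such as $t\mapsto D_{F(t)}$ for a fast-growing computable $F$. It already shows $x\in D$ at stages where $\cost(x,\cdot)$ is still negligible, so it never agrees and essentially never pays for $x$. Suppose your guessing approximation says ``yes'' early on every fresh question; an $\omega$-c.a.\ approximation of $B'$ that starts at $1$, or a jump trace of $B$ whose junk arrives first, can do exactly that. Then every action may be wasted, and making $R$ sparser does not help. The unrelativised argument works precisely because the agreement test there is exact. So the ``calibration'' you postpone is the whole difficulty. Compare the proof of (a)$\Rightarrow$(b) of \cref{thm:cost_function_characterisation}: there, coping with a trace of $\Psi^B$ with uncontrolled bound needed the very small traces from $A\SJR B$ plus the hypercube and promotion machinery.

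The missing idea is that $D$ need not be c.e. You already use that every $K$-trivial set lies below a c.e.\ $K$-trivial set; use this at the end instead, and build $D$ by a construction fully relativised to~$B$. The paper does exactly this, without cost functions:
\begin{itemize}
\item By Cholak, Downey and Greenberg there are a fixed computable $h$, a functional $\Psi$ and a $K$-trivial $A$ such that $\Psi^A$ has no $h$-bounded c.e.\ trace.
\item Passing to $J$ gives a fixed computable bound $g$, which is unchanged by relativisation.
\item Relativising to $B$ gives $A$, $K$-trivial relative to $B$, such that $J^{A\oplus B}$ has no $g$-bounded $B$-c.e.\ trace. Hence $\hat C=A\oplus B\not\SJR B$.
\item Since $B$ is low for $K$, $A$ and hence $\hat C$ are $K$-trivial, and any c.e.\ $K$-trivial $C\ge_\Tur\hat C$ works.
\end{itemize}
Your cost-function route can be rescued the same way. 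Run your diagonalisation relative to $B$, where $\Phi_e^B$ can be evaluated exactly. This gives a $B$-c.e.\ $D$ with $D\not\models_B\cost$ and a $B$-computable approximation of finite $\cost_\Omega$-cost; the latter makes $D$ $K$-trivial, since $B$ is low for $K$. Then pad a c.e.\ $K$-trivial set above $D$ with $\hat B$, and finish with \cref{thm:cost_function_characterisation} as you planned.
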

\begin{proof}  By   \cite{Cholak.Downey.ea:08},  for some fixed computable function $h$ there is a functional  $\Psi$ and $K$-trivial set $A$ such that $\Psi^A$ has no c.e.\ trace bounded by $h$. (Also see \cite[8.5.1]{Nies:book} where this is shown for $h(n)= 0.5 \log \log n$.) Since there is a computable function $p$ such that  $\Psi^X(n)= J^X(p(n))$ for each $X,n$,   we get a    computable function $g$ so that the statement holds for $J$ and $g$ instead of $\Psi$ and $h$. Relativizing to $B$ we can retain the same $g$, so    for each $B$ there is a  set $A$ that is $K$-trivial in $B$, such that $J^{\hat C}$ does not have a $B$-c.e.\ trace bounded by $g$, where $\hat C=A \oplus B$. In particular, $\hat C \not \SJR B$. 

If $B$ is $K$-trivial, it is low for $K$, and hence $A$ is also $K$-trivial. Therefore $\hat C$ is $K$-trivial.  To conclude the proof, take a  $K$-trivial c.e.\ set $ C \ge_T \hat C$.  (For standard facts on $K$-trivial sets see, for instance, \cite[Ch.\ 5]{Nies:book}.)
\end{proof}

We conjecture that for each benign  cost function $\cost$, for each    $B\models \cost$,  there is a c.e.\   set   $A \models \cost$ such that $A \not \SJR B$. This would strengthen \cref{prop:K-triv}.  

\smallskip

By \cite[Thm.\:3.1]{Greenberg.Miller.ea:24}, every ML-degree of a $K$-trivial set contains a c.e.\ set. By \cite{Diamondstone.Greenberg.ea:15}, every strongly jump-traceable set is computable from a c.e.\ strongly jump-traceable set. It is natural to ask:

\begin{question}
	Does every $K$-trivial $\SJR$-degree contain a c.e.\ set?
\end{question}

A reducibility strictly weaker than $\le_{\textup{ML}}$ is $A\le_{\omega-\text{ML}} B$: every $\omega$-c.a.\ ML-random Turing above $B$ is also above $A$. This reducibility is somewhat closer to $\SJR$. \Cref{thm:random_computing_characterisation} for the class $\+C$ being the $\omega$-c.a.\ sets implies:

\begin{proposition} \label{prop:SJT_and_omega_ML}
 	Let $A$ and~$B$ be $K$-trivial.
 	\begin{enumerate}
 		\item[(1)] If $A\SJR B$  then $A\le_{\omega-\textup{ML}} B$. 
 		\item[(2)] $A\SJR \emptyset$ if and only if $A\le_{\omega-\textup{ML}} \emptyset$. 
 	\end{enumerate}
 \end{proposition}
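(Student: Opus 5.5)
Both parts follow from \cref{thm:random_computing_characterisation} applied with $\+C$ the class of $\omega$-c.a.\ sets (case (3)). The plan is to unwind $\le_{\omega-\textup{ML}}$ and match it with clause (b) of that theorem.

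For (1), let $A,B$ be $K$-trivial with $A\SJR B$. By (a)$\RA$(b) of \cref{thm:random_computing_characterisation}, $A\le_\Tur B\oplus Y$ for every $\omega$-c.a.\ ML-random $Y$. To get $A\le_{\omega-\textup{ML}} B$, let $Y$ be $\omega$-c.a.\ and ML-random with $Y\ge_\Tur B$. Then $B\oplus Y\equiv_\Tur Y$, so $A\le_\Tur Y$, as required.

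For (2), the forward direction is the case $B=\emptyset$ of (1), since $\emptyset$ is $K$-trivial. For the converse, assume $A$ is $K$-trivial and $A\le_{\omega-\textup{ML}}\emptyset$. Every set computes $\emptyset$, so $A\le_\Tur Y = \emptyset\oplus Y$ (up to Turing equivalence) for every $\omega$-c.a.\ ML-random $Y$. This is clause (b) of \cref{thm:random_computing_characterisation} with $B=\emptyset$ and $\+C$ the $\omega$-c.a.\ sets. The direction (b)$\RA$(a) then gives $A\SJR\emptyset$.

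There is no real obstacle; the only points to check are that $\emptyset$ counts as $K$-trivial, so the theorem applies with $B=\emptyset$, and that joining with $B$ is harmless when $Y\ge_\Tur B$. The $K$-triviality hypothesis on $A$ in (2) is needed for the (b)$\RA$(a) direction. Alternatively, for $B=\emptyset$ one could cite \cite{Greenberg.Hirschfeldt.ea:12,Diamondstone.Greenberg.ea:15} directly, where no $K$-triviality assumption is needed.
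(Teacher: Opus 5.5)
Your proof is correct and follows the paper's route: the paper derives this proposition directly from \cref{thm:random_computing_characterisation} with $\+C$ the class of $\omega$-c.a.\ sets. You spell out the same unwinding, using $B\oplus Y\equiv_{\mathrm{T}} Y$ when $Y\ge_{\mathrm{T}} B$ for (1), and specialising to $B=\emptyset$ in both directions of the theorem for (2).
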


Finally, we mention a related notion. For $\+C \sub \+ P(\NN) $  one defines $\+ C^\Diamond$ as the class of c.e.\ sets that are Turing below each ML-random set in $\+ C$~\cite[Section 8.5]{Nies:book}. \Cref{thm:random_computing_characterisation} implies:

\begin{corollary} Let $\+C $ be a nonempty class of ML-randoms that contains no weakly Demuth random. Then $\+ C^\Diamond$ is downward closed under $\SJR$. \end{corollary}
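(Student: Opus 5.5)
The plan is to derive this directly from \cref{thm:random_computing_characterisation}, instantiated with $\+C$ the class of sets that are not weakly Demuth random. Let $\+C_0$ be the given nonempty class of ML-randoms containing no weakly Demuth random set. Then $\+C_0$ is contained in the ML-random members of $\+C$. Suppose $A\in \+C_0^\Diamond$ and $D \SJR A$ with $D$ c.e. We must show that $D\in \+C_0^\Diamond$, that is, $D\le_\Tur Y$ for every $Y\in\+C_0$.

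The first step is to place $A$ and $D$ among the $K$-trivials, so that the theorem applies. Since $\+C_0$ is nonempty, pick $Y\in\+C_0$. Then $A\le_\Tur Y$, where $Y$ is ML-random and $\omega$-c.a.? Not necessarily, so we instead argue as follows. We know $A$ is c.e.\ and computable from the ML-random $Y$, and $Y$ is not weakly Demuth random, hence in particular not Demuth random.

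At this point I am unsure that a single ML-random which fails weak Demuth randomness forces $A$ to be $K$-trivial. Hirschfeldt--Nies--Stephan gives $K$-triviality only for c.e.\ sets below an incomplete random. I would therefore argue that $Y$ is Turing incomplete whenever $\+C_0^\Diamond$ contains a noncomputable set, since otherwise every c.e.\ set lies in $\+C_0^\Diamond$. That degenerate case must be handled separately. If some $Y\in\+C_0$ is complete, the case where every $Y\in \+C_0$ computes $\emptyset'$ is trivial: $\+C_0^\Diamond$ is then all c.e.\ sets, which is downward closed among c.e.\ sets. Otherwise, pick an incomplete $Y\in\+C_0$. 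Then every member of $\+C_0^\Diamond$ is a c.e.\ set below an incomplete ML-random, hence $K$-trivial.

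It remains to show that $D$ is $K$-trivial. This does not follow from $D\SJR A$ directly, and it is the main obstacle. I would try to establish it via \cref{thm:cost_function_characterisation}. $A$ is $K$-trivial, hence jump-traceable, and $D\le_{\textup{JT}}A$ then gives that $D$ is jump-traceable. So $D\models_A\cost_\Omega$, since $\cost_\Omega$ is benign. Because $A$ is low for $K$, the $A$-relativised version of the cost-function characterisation of $K$-triviality makes $D$ $K$-trivial relative to $A$, hence $K$-trivial. Alternatively, one can avoid this step by observing that the proof of (a)$\RA$(b) in \cref{thm:random_computing_characterisation} used only that $B$ is $K$-trivial and that $D\SJR B$, through the benign cost function $\cost$ and the fact that $B$ is low for ML-randomness. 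I would adopt this second route, since it requires no hypothesis on $D$.

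Granting this, apply (a)$\RA$(b) with $B=A$: for every ML-random $Y$ that is not weakly Demuth random, $D\le_\Tur A\oplus Y$. For $Y\in\+C_0$ we have $A\le_\Tur Y$, so $A\oplus Y\equiv_\Tur Y$ and hence $D\le_\Tur Y$. Thus $D\in\+C_0^\Diamond$.
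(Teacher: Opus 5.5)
Your proof is correct and follows the paper's approach. The paper simply says the corollary follows from (a)$\Rightarrow$(b) of \cref{thm:random_computing_characterisation} for the class of non-weakly-Demuth-random sets, using $A\oplus Y\equiv_\Tur Y$ for $Y\in\+C$. Your argument supplies the details the paper leaves implicit: the case split on whether all members of $\+C$ are Turing complete, Hirschfeldt--Nies--Stephan to make members of $\+C^\Diamond$ $K$-trivial, and the paper's remark that this direction needs only the upper set to be $K$-trivial. The abandoned first route and the muddled sentence about incompleteness can be deleted.
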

For instance, let $\+ C  = \{\Omega_R\}$ for a co-infinite computable set $R$. This shows that the subideals, in the Turing sense,  of the $K$-trivials considered in \cite{Greenberg.Miller.etal:19,Greenberg.Miller.ea:24} are actually closed  downward under the weaker SJT-reducibility.

\bibliographystyle{plain}

\end{document}